\theoremstyle{plain}
	\newtheorem{thm}{Theorem}
	\newtheorem{prop}[thm]{Proposition}
	\newtheorem{lem}[thm]{Lemma}
	\newtheorem{conj}{Conjecture}
\theoremstyle{definition}
	\newtheorem*{rmk}{Remark}
	\newtheorem{dftn}[thm]{Definition}
\DeclareMathOperator{\conv}{conv}
\DeclareMathOperator{\cone}{cone}
\DeclareMathOperator{\Span}{Span}
\newcommand{\sep}{\:|\:}
\title{Low elements and small inversion sets are in bijection in rank 3 Coxeter groups}
\author{Balthazar Charles\addressmark{1}\thanks{\href{mailto:balthazar.charles@lri.fr}{balthazar.charles@lri.fr}}}
\address{\addressmark{1}Université Paris-Saclay, CNRS, Laboratoire de recherche en informatique, 91405, Orsay, France}
\abstract{In this extended abstract we announce a proof that, in a Coxeter group of rank 3, low elements are in bijection with small inversion sets. This gives a partial confirmation of Conjecture 2 in [Dyer, Hohlweg '16]. That same article provides the main ingredient: the \emph{bipodality} of the set of small roots is used to propagate information on the vertices of \emph{inversion polytopes}.}
\keywords{Infinite Coxeter group, low element, inversion polytope.}
\begin{document}

\maketitle

One of the reasons that Coxeter groups are so well studied, other than their ubiquity, is perhaps their position at the intersection of group theory, combinatorics and geometry. 
A most impressive example of a result made possible by this confluence is the proof by Brink \& Howlett in \cite{brink1993finiteness} of the automatic structure of 
infinite 
Coxeter groups. 
Indeed, the combinatorial construction of an automaton recognizing the reduced words of the Coxeter group is based on the notion of \emph{small roots} which is geometric in nature. 
In \cite{hohlweg2016automata}, Hohlweg, Nadeau \& Williams explore the connection of \emph{Garside shadow} and \emph{low elements} with families of automatas recognizing the reduced words of a Coxeter group.
These two objects, the latter being an example of the former, were introduced by Dyer \& Hohlweg in \cite{dyer2016small} and were motivated by questions of decidability of the word problem in Braid groups. 

In this paper, we focus our study on low elements and specially on Conjecture~2 of \cite{dyer2016small}, where Dyer \& Hohlweg ask whether low elements are in bijection with \emph{small inversion sets}. This conjecture, if true, would give a connection with  \emph{Shi arrangements} and can in fact be formulated as "low elements are in bijection with Shi regions".
As Shi arrangements and their regions are extensively studied (\cite{fishel2012counting} or \cite{levear2020bijection} for instance), this could reveal a structure on the set of low elements or, conversely, provide a model for the Shi regions.

In this extended abstract, we announce a proof to this conjecture in the case of rank 3 Coxeter groups. In Section~\ref{sec:prerequisites}, we recall notions on Coxeter groups with emphasis on geometric constructions: root systems, inversion sets, projective picture, and the Tits cone. In the following Section~\ref{sec:conjecture} we present the conjecture. Finally, in Section~\ref{sec:result}, we explain our strategy for the proof in rank 3 before sketching the demonstrations of its main steps.


\section{Coxeter groups and geometry: a brief guided tour}\label{sec:prerequisites}
In this section we recall notions related to Coxeter groups necessary both for the statement of the conjecture in Section~\ref{sec:conjecture} and the sketches of proofs in Section~\ref{sec:result}.

\subsection{Geometric representations and based root systems}\label{part:roots}
Let $V$ be a finite dimensional real vector space equipped with a symmetric bilinear form $B$.
A \emph{simple system} in $(V, B)$ is a set $\Delta \subset V$ of $\emph{simple roots}$ such that :
\begin{enumerate}
	\item $\Delta$ is positively linearly independent meaning that no line trough $0$ is contained in the cone $\cone(\Delta) = \sum_{\delta \in \Delta} \mathbb{R}_+\delta$ generated by $\Delta$.
	\item For all $\delta \in \Delta$, $B(\delta, \delta) = 1$.
	\item For all \emph{distinct} $\delta, \delta' \in \Delta$, $B(\delta, \delta') \in\: ]-\infty, -1] \cup \{-\cos(\pi/m_{\delta, \delta'}) \sep m_{\delta, \delta'} \in \mathbb{N}_{\geq 2}\}$.
\end{enumerate}
Let $S = \{s_{\delta} \sep \delta \in \Delta\}$ where $s_{\delta} : x \mapsto x - B(x, \delta)\delta$ is the $B$-reflection associated with $\delta$. 
Denote by $W$ the subgroup of the group of $B$-orthogonal linear maps generated by $S$; we say that $W$ is a \emph{Coxeter group} and that $(W,S)$ is a \emph{Coxeter system} of \emph{rank} $|S|$. 
The \emph{length} of an element $w \in W$, denoted by $|w|$, is the minimal length of a product of generators equal to $w$. For all $(w, s) \in W \times S$, $|ws| = |w| \pm 1$. When $|ws| = |w|-1$, we say that $s$ is a \emph{descent of $w$} and that $ws$ \emph{is covered by} $w$. The transitive reflexive closure of the covering relation is called the \emph{right weak order} and is denoted by $\leq_R$.


We say that $(V, B)$ is a \emph{geometric representation} of $W$. 
When $\Delta$ is linearly independent and $B(e_s, e_t) \geq -1$, we get the $\emph{classical representation}$. 
The orbit of $\Delta$ under $W$ is called the \emph{root system} and is denoted by $\Phi$ (see Figure~\ref{fig:inversion_set}). 
The pair $(\Phi, \Delta)$ is called a \emph{based root system} (as used in \cite{hohlweg2014asymptotical}). 
Based root systems enjoy the following usual properties: 
\begin{itemize}
	\item The root system is partitioned in \emph{positive roots} $\Phi^+ = \Phi\cap\cone(\Delta)$ and \emph{negative roots} $\Phi^-=-\Phi^+$.
	\item For $\rho \in \Phi$, $\mathbb{R}\rho \cap \Phi = \{\rho, -\rho\}$.
\end{itemize}
In addition, based root systems restrict well to reflection subgroups. Indeed, let $A \subset \Phi^+$, if $H_A$ is the subgroup of $W$ generated by $B$-reflections associated to $A$, we set $V_A = \Span(A)$, $\Phi_A=H_A(A)$ and $\Delta_A$ to be the roots generating the extreme rays of $\cone(\Phi_A \cap \Phi^+)$; then $(V_A, B_{|V_A})$ is a geometric representation of $H_A$ of which $(\Phi_A, \Delta_A)$ is a based root system. Beware: it needs not be a classical representation even when the original representation is.

This restriction property allows to test if a reflection subgroup $H_A$ is finite:  it is if and only if $\Span(\Phi_A) \cup Q =\{0\}$ where $Q = \{v \in V \sep B(v,v) = 0\}$ is the \emph{isotropic cone of $B$}.

\subsection{Inversion sets}\label{part:inversions}
For the remainder of this paper, we fix a Coxeter system $(W, S)$ of \emph{finite} rank. 
Many of the combinatorial definitions on Coxeter groups have geometric equivalents through \emph{inversion sets}. We recall below some of their properties (see \cite{humphreys1990reflection} for details).

\begin{prop}\label{prop:inversion_sets}
	Let $w \in W$ be a group element. The (right) \emph{inversion set of $w$} is $N(w) = \Phi^+ \cap w(\Phi^-)$.
	The inversion set map from $(W, \leq_R)$ to the power set $(\mathfrak{P}(\Phi), \subseteq)$ of $\Phi$ is an increasing injection. Moreover, $s$ is a descent of $w$ if and only if $N(w) \setminus \{-w(e_s)\} = N(w')$ for some $w'$.
\end{prop}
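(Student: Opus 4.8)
The plan is to prove each of the three assertions in Proposition~\ref{prop:inversion_sets} in turn. First I would establish the combinatorial description of the inversion set. The key identity is that for a reduced word $w = s_{i_1}s_{i_2}\cdots s_{i_k}$, the inversion set is
\[
N(w) = \{\, e_{i_1},\ s_{i_1}(e_{i_2}),\ s_{i_1}s_{i_2}(e_{i_3}),\ \ldots,\ s_{i_1}\cdots s_{i_{k-1}}(e_{i_k}) \,\},
\]
and that these $k$ positive roots are distinct, so that $|N(w)| = |w|$. The standard route to this is to first check that a simple reflection $s_\delta$ sends $\Phi^+ \setminus \{\delta\}$ to itself and $\delta$ to $-\delta$; then one verifies by induction on length, using $N(ws) = s(N(w)) \cup \{e_s\}$ when $|ws| = |w|+1$ (equivalently $N(sw) = \{e_s\} \sqcup s(N(w))$ for a left descent), that the listed roots are exactly those positive roots sent to negative roots by $w^{-1}$. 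This also yields $N(w) = \Phi^+ \cap w(\Phi^-)$ as a set.

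Next I would prove that the map $w \mapsto N(w)$ is an increasing injection from $(W, \leq_R)$ to $(\mathfrak{P}(\Phi), \subseteq)$. Injectivity follows from the recovery of $w$ from $N(w)$: the identity has empty inversion set, and one reconstructs a reduced word by repeatedly stripping off simple roots, since $\delta \in N(w)$ for a simple root $\delta$ exactly when $|ws_\delta| < |w|$ (a right descent). For monotonicity, it suffices to check that a single covering $w' \lessdot_R w$ gives $N(w') \subseteq N(w)$, and then extend by transitivity along chains in the weak order. If $w = w's$ with $|w| = |w'| + 1$, then using $N(w) = N(w's)$ and the formula above, $N(w') \subseteq N(w)$ because appending a generator on the right only adds roots to the inversion set; more precisely $N(w) = N(w') \sqcup \{w'(e_s)\}$ where $w'(e_s) = -w(e_s)$ is positive, so the difference is a single root.

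Finally, for the descent characterization, the point is the explicit formula $N(w) = N(w') \sqcup \{-w(e_s)\}$ whenever $s$ is a right descent of $w$ and $w' = ws$. I would argue both directions. If $s$ is a descent, then $|ws| = |w| - 1$, so $w = (ws)s$ realizes $w$ as a cover of $w' = ws$, and the computation above shows $N(w) \setminus \{-w(e_s)\} = N(w')$ with $-w(e_s) \in \Phi^+$ the unique added root. Conversely, if $N(w) \setminus \{-w(e_s)\} = N(w')$ for some $w' \in W$, then in particular $-w(e_s) \in N(w)$, which by the defining equation $N(w) = \Phi^+ \cap w(\Phi^-)$ forces $-w(e_s) \in \Phi^+$, i.e. $w(e_s) \in \Phi^-$; this is equivalent to $|ws| < |w|$, so $s$ is a descent of $w$. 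I expect the main obstacle to be the bookkeeping in the inductive identity $N(ws) = N(w) \sqcup \{w(e_s)\}$ together with the sign tracking needed to identify the added root as $-w(e_s)$ and confirm its positivity; once that single-root difference formula is in hand, injectivity, monotonicity, and the descent characterization all follow with little extra work.
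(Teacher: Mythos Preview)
The paper does not actually prove Proposition~\ref{prop:inversion_sets}: it is stated as a recalled fact, with the reader referred to \cite{humphreys1990reflection} for details. So there is no ``paper's own proof'' to compare against; your outline is the standard textbook argument and is appropriate here.

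That said, there is one slip in your injectivity paragraph. With the convention $N(w) = \Phi^+ \cap w(\Phi^-)$, a simple root $e_s$ lies in $N(w)$ exactly when $w^{-1}(e_s) \in \Phi^-$, which is the condition $|sw| < |w|$, i.e.\ $s$ is a \emph{left} descent of $w$, not a right descent. Your reconstruction-by-stripping-simple-roots therefore produces a reduced word from the left (via $N(sw) = s(N(w)\setminus\{e_s\})$), not from the right. This still yields injectivity, but the sentence ``$\delta \in N(w)$ for a simple root $\delta$ exactly when $|ws_\delta| < |w|$'' is false as written. Alternatively, you can bypass the reconstruction entirely: if $N(w)=N(w')$ then $w(\Phi^-)=w'(\Phi^-)$ (since the negative part of $w(\Phi^-)$ is $\Phi^-\setminus(-N(w))$), hence $N(w^{-1}w')=\emptyset$ and $w=w'$. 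The rest of your argument---the explicit enumeration of $N(w)$ along a reduced word, the single-root difference $N(w) = N(ws)\sqcup\{-w(e_s)\}$ for a right descent $s$, and the descent characterization---is correct.
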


\begin{figure}[h] 
	\centering
	\includegraphics[width=0.85\textwidth]{./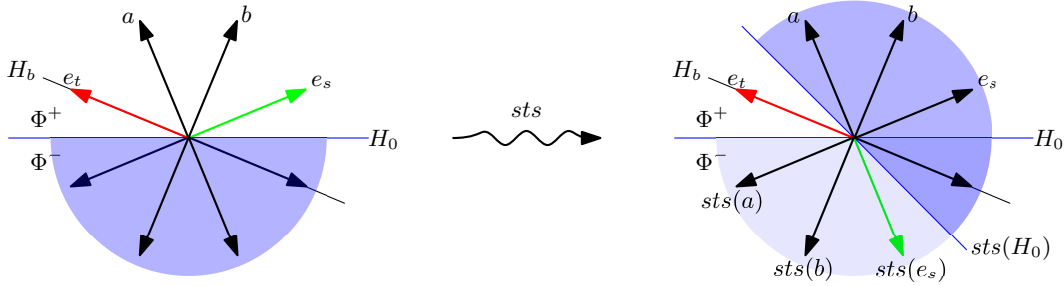}
	\caption{The inversion set of $sts$ (the orthogonal reflection associated with $b$) in the dihedral group $D_4$: $N(sts) = \{e_s, a, b\}$}
	\label{fig:inversion_set}
\end{figure}

Denote by $\Gamma_w = \{\gamma \in N(w) \sep \exists w', N(w) \setminus \{\gamma\} = N(w')\}$ the set of \emph{right geometric descents}, or descents for short. We want to draw the reader's attention on the fact that $\Gamma_w = \emptyset \Leftrightarrow w=1 \Leftrightarrow N(w) = \emptyset$ and that $|\Gamma_w| = |S|$ is possible only if $W$ is finite, in which case $w$ is uniquely defined by $N(w) = \Phi^+$. When this occurs, $w$ is called the \emph{maximal element of $W$}. This will be useful later in Lemma~\ref{lem:sources}.

The question of deciding whether a subset of $\Phi^+$ is an inversion set arises naturally. Fortunately, Proposition~2.11 of \cite{hohlweg2016inversion} gives a simple characterization.
\begin{prop}
	Let $\Phi$ be the root system of a geometrical representation of $(W, S)$. A subset $A$ of $\Phi^+$ is an inversion set if and only if it is finite and \emph{separable}: there exists some hyperplane $H$ such that $A$ is strictly on one side of $H$ and $\Phi^+ \setminus A$ is strictly on the other side.
\end{prop}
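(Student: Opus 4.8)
The plan is to prove the two implications separately: the forward one is a short geometric computation, and the reverse one is an induction on $|A|$ that peels off a simple root at each step.

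For the forward direction, suppose $A = N(w)$. Finiteness is immediate since $|N(w)| = |w|$. For separability I would exploit that $\Delta$ is positively linearly independent, so $\cone(\Delta)$ is a pointed polyhedral cone; its dual then has nonempty interior and there is a linear functional $f$ with $f(v) > 0$ for every $v \in \cone(\Delta) \setminus \{0\}$, in particular $f > 0$ on $\Phi^+$ and hence $f < 0$ on $\Phi^-$. I set $g = f \circ w^{-1}$. For $\rho \in \Phi^+$ one has $\rho \in N(w)$ iff $w^{-1}(\rho) \in \Phi^-$, and $\rho \in \Phi^+ \setminus N(w)$ iff $w^{-1}(\rho) \in \Phi^+$; thus $g$ is negative on $N(w)$ and positive on $\Phi^+ \setminus N(w)$, and the hyperplane $\ker g$ separates $A$ from $\Phi^+ \setminus A$.

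For the reverse direction, let $A$ be finite and separable, with a \emph{linear} functional $\phi$ such that $\phi < 0$ on $A$ and $\phi > 0$ on $\Phi^+ \setminus A$; here the separating hyperplane must be taken through the origin, the affine version of the statement being false (in type $A_2$ the set $\{\alpha+\beta\}$ is affinely separable from $\{\alpha,\beta\}$ but is not an inversion set). I argue by induction on $|A|$, the base case being $A = \emptyset = N(1)$. If $A \neq \emptyset$, I first locate a simple root inside it: were every $\delta \in \Delta$ to satisfy $\phi(\delta) > 0$, then every positive root, being a nonzero nonnegative combination of simple roots, would have positive image under $\phi$, forcing $A = \emptyset$; hence some $\delta \in \Delta$ has $\phi(\delta) < 0$, i.e. $\delta \in A$. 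I then set $A' = s_\delta(A \setminus \{\delta\})$. The crucial structural fact, valid in any based root system because $\Delta$ is only assumed positively independent, is that $s_\delta$ permutes $\Phi^+ \setminus \{\delta\}$: if $\rho \in \Phi^+ \setminus \{\delta\}$ then, since $\mathbb{R}\rho \cap \Phi = \{\rho, -\rho\}$, the root $\rho$ has a positive coordinate on some simple root other than $\delta$, which $s_\delta$ leaves unchanged, so $s_\delta(\rho) \in \Phi^+ \setminus \{\delta\}$. Thus $A' \subseteq \Phi^+ \setminus \{\delta\}$ and $|A'| = |A| - 1$. Moreover $\psi = \phi \circ s_\delta$ separates $A'$ from its complement: for $\rho \neq \delta$ the sign of $\psi(\rho) = \phi(s_\delta \rho)$ is dictated by whether $s_\delta \rho \in A$, which is equivalent to $\rho \in A'$, while $\psi(\delta) = -\phi(\delta) > 0$ places $\delta$ correctly in $\Phi^+ \setminus A'$. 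By the induction hypothesis $A' = N(w')$ for some $w'$, and $\delta \notin N(w')$.

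Finally I would recover $A$ through the standard inversion-set recursion: when $\delta \notin N(w')$, one checks directly from $N(v) = \Phi^+ \cap v(\Phi^-)$ that $N(s_\delta w') = \{\delta\} \sqcup s_\delta(N(w'))$, the case $\rho = \delta$ following from $\delta \notin N(w')$ and the case $\rho \neq \delta$ from the permutation property above. Since $A = \{\delta\} \cup s_\delta(A')$ by involutivity of $s_\delta$, this yields $A = N(s_\delta w')$ and closes the induction. I expect the main obstacle to lie entirely in the reverse direction, and more precisely in carrying out the peeling within a \emph{general} geometric representation rather than the classical one: the two facts I lean on — that $s_\delta$ permutes $\Phi^+ \setminus \{\delta\}$ and the recursion for $N(s_\delta w')$ — must be established from positive linear independence of $\Delta$ alone, and one must also keep track of the fact that the separating hyperplane is genuinely linear, since as noted above the affine relaxation already fails.
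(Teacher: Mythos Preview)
The paper does not actually prove this proposition: it quotes it verbatim as Proposition~2.11 of \cite{hohlweg2016inversion} and moves on. So there is no in-paper argument to compare against; your task was really to supply a proof where the paper only gives a reference.

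Your argument is correct and is the standard one. The forward direction via $g = f\circ w^{-1}$ with $f$ interior to the dual of $\cone(\Delta)$ is exactly right, and your observation that the hyperplane must be \emph{linear} (your $A_2$ counterexample to the affine reading) is a genuine point worth making explicit. For the reverse direction, the induction that peels off a simple root is the expected approach; the recursion $N(s_\delta w') = \{\delta\}\cup s_\delta(N(w'))$ when $\delta\notin N(w')$ is verified exactly as you outline. One small comment: your justification that $s_\delta$ permutes $\Phi^+\setminus\{\delta\}$ via ``the coordinate on some $\beta\neq\delta$ is unchanged'' tacitly uses uniqueness of coordinates, which can fail when $\Delta$ is only positively independent. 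You flag this yourself, and the cleanest fix avoids coordinates entirely: since $|N(s_\delta)| = |s_\delta| = 1$, we have $\Phi^+\cap s_\delta(\Phi^-)=\{\delta\}$, whence $s_\delta(\Phi^+\setminus\{\delta\})\subset\Phi^+$, and involutivity gives the permutation. With that adjustment the proof goes through in any geometric representation, as required by the statement.
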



\subsection{The projective picture}\label{part:projective}
From now on, we set $(V, B)$ to be the classical geometric representation of $(W, S)$ and $(\Phi, \Delta)$ the corresponding based root system.

In the case of infinite Coxeter groups, $\Phi$ can prove challenging to picture: there are infinitely many roots, they are of unbounded norm... 
Following \cite{hohlweg2014asymptotical}, we may obtain a handy depiction of the root system: the \emph{projective picture}. 
To a root $\rho$ we associate a \emph{normalized root} $\widehat{\rho} = \mathbb{R}\rho\cap H_1$, where $H_1$ is the affine hyperplane generated by $\Delta$. 
The normalized roots form the set $\widehat{\Phi} = \{\widehat{\rho} \sep \rho\in \Phi\}$, as shown in Figure~\ref{fig:normalisation}. 
We call \emph{projective picture} the set $\widehat{\Phi}$ seen as embedded in $H_1$.
Since $\Phi^+$ is naturally in bijection with $\widehat{\Phi}$, we will identify $a \in \Phi^+$ with $\hat{a}$.

\begin{figure}[h] 
	\centering
	\includegraphics[width=0.9\textwidth]{./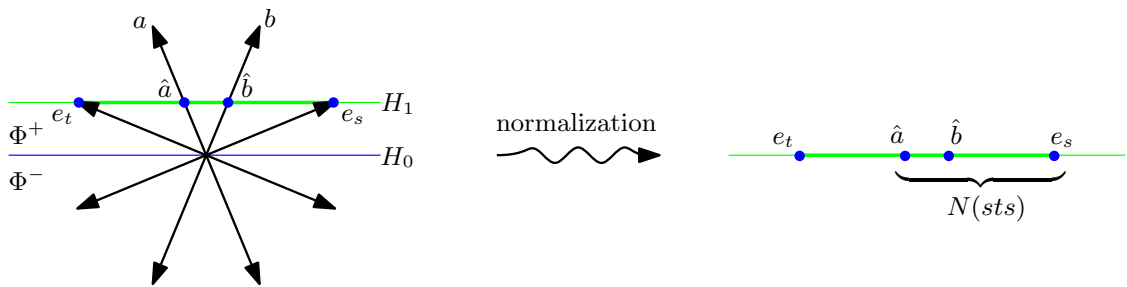}
	\caption{The projective picture for the dihedral group $D_4$.}
	\label{fig:normalisation}
\end{figure}

\begin{rmk}
	Since $\Phi^+ \subset \cone(\Delta)$ and $\Delta \subset H_1$, we have $\widehat{\Phi} \subset \conv(\Delta)$, the convex hull of $\Delta$. Because of this, the projective picture is essentially the image of $\Phi$ by the canonical projection in the projective space $\mathbb{P}(V)$. 
\end{rmk}

More than a simple tool to draw pictures of rank 3 or 4 root systems, the projective picture has several advantages. Because it is a subset of a compact set, $\widehat{\Phi}$ has accumulation points which, by \cite{hohlweg2014asymptotical}, are on the isotropic cone (strictly speaking, on $Q_1 = Q \cap H_1$). 
This circumscribes the treatment of many topological issues to a neighborhood of $Q_1$.
In our proofs in Section~\ref{sec:result}, it will also allow us to simplify the formalism: cones become convex hulls, extreme rays become points, etc. This allows us to translate the inversion set into an "even more" geometric object.

\begin{dftn}
	The \emph{inversion polytope} of an element $w$ in $W$ is the convex hull of $N(w)$ in $H_1$. It is denoted by $\mathcal{P}_w$. The set of its vertices is denoted by $N^1(w)$.
\end{dftn}

Recall that $N(w)$ is separable. This too translates in the projective picture: there exists some affine hyperplane $H$ of $H_1$ strictly separating the inversion polytope $\mathcal{P}_w$ from its complement in $\widehat{\Phi}$. This imposes that $N(w)$ is convex in the sense that $\mathcal{P}_w \cap \widehat{\Phi} = N(w)$ (see \cite[Lemma 2.10]{hohlweg2016inversion}): $\mathcal{P}_w$ and $N(w)$ hold the same information.

\subsection{The Tits cone}\label{part:tits}
We present here the notion of \emph{Tits cone} which can be understood as a dual of the root system. This dual point of view will be useful in stating simply the proofs in  \S\ref{part:wiggling} where we examine the interactions of inversion sets and moving separation hyperplanes. We refer to \cite{abramenko2008buildings} for more details.

Recall that we have fixed $(W, S)$ of finite rank, which means that $V$ is finite dimensional.
We denote by $\langle\,\cdot\sep\cdot\, \rangle: V \times V^* \longrightarrow \mathbb{R}$ the duality bracket. 
For $v \in V$ we set $H_v = \{f \in V^* \sep \langle v\sep f\rangle = 0\}$, $H^+_v = \{f \in V^* \sep \langle v\sep f\rangle > 0\}$ and $H^-_v = - H^+_v$. 
The map $v \mapsto (H_v \cup H_v^+)$ is a bijection from the sphere of $V$ to the set of closed half-spaces of $V^*$ which is bicontinuous.

Let the set $\mathcal{C} = \bigcap_{\delta \in \Delta} H^+_{\delta} = \bigcap_{\rho \in \Phi}H_{\rho}^+$ be the \emph{fundamental chamber} and $\overline{\mathcal{C}}$ its closure for the usual topology,
then the \emph{Tits cone of $W$} is $U = \bigcup_{w \in W}w(\overline{\mathcal{C}})$. 
The connected components of $U \setminus \bigcup_{\rho \in \Phi}H_{\rho}$ are called the \emph{Weyl chambers}. 
They are in bijection with $W$ via $w \longmapsto w(\mathcal{C})$, where $w$ acts on $\mathcal{C}$ by duality. 
Fix a root $\rho \in \Phi$ and a Weyl chamber $w(\mathcal{C})$. We say that $H_{\rho}$ is a \emph{wall of $w(\mathcal{C})$} if $H_{\rho} \cap w(\overline{\mathcal{C}})$ spans $H_{\rho}$. Walls give us yet another way to recognize inversions and descents.

\begin{prop}\label{prop:tits}
	For any $w \in W$, $\rho \in N(w)$ if and only if $w(\mathcal{C}) \subset H_{\rho}^-$. Moreover $\rho$ is a descent of $w$ if and only if $\rho \in N(w)$ and $H_{\rho}$ is a wall of $w(\mathcal{C})$.
\end{prop}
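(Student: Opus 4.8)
The plan is to translate everything through the contragredient action of $W$ on $V^*$, for which $\langle v \sep w(f)\rangle = \langle w^{-1}(v)\sep f\rangle$ for all $v \in V$ and $f \in V^*$. For the first assertion I would observe that $w(\mathcal{C}) \subset H_\rho^-$ means $\langle \rho \sep w(f)\rangle < 0$ for every $f \in \mathcal{C}$, which by the displayed identity is the same as $\langle w^{-1}(\rho)\sep f\rangle < 0$ for every $f \in \mathcal{C}$. Since $\overline{\mathcal{C}} = \{f \sep \langle \delta \sep f\rangle \geq 0 \text{ for all } \delta \in \Delta\}$ is the dual cone of $\cone(\Delta)$ and $\Delta$ is linearly independent in the classical representation, a short dual-cone (biduality) argument shows that a vector $v$ satisfies $\langle v \sep f\rangle < 0$ on the nonempty open chamber $\mathcal{C}$ if and only if $v \in -\cone(\Delta)\setminus\{0\}$. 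Applying this to $v = w^{-1}(\rho)$, and using that a root lies in $-\cone(\Delta)$ exactly when it is negative (because positive independence of $\Delta$ forces $\cone(\Delta)\cap(-\cone(\Delta)) = \{0\}$), we obtain $w(\mathcal{C}) \subset H_\rho^-$ iff $w^{-1}(\rho) \in \Phi^-$, i.e. iff $\rho \in w(\Phi^-)$. Since $N(w) \subseteq \Phi^+$, for $\rho$ positive this is precisely $\rho \in \Phi^+ \cap w(\Phi^-) = N(w)$.

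For the second assertion I would first recall the facial structure of the fundamental chamber: in the classical representation $\overline{\mathcal{C}}$ is the polyhedral cone cut out by the irredundant inequalities $\langle \delta \sep \cdot\rangle \geq 0$, so its walls are exactly the hyperplanes $H_\delta$ with $\delta \in \Delta$ (each inequality is non-redundant precisely because $\Delta$ is linearly independent). Because $w$ acts linearly and bijectively on $V^*$ with $w(H_\delta) = H_{w(\delta)}$, the walls of $w(\mathcal{C})$ are exactly the hyperplanes $H_{w(\delta)}$, $\delta \in \Delta$. Hence, using $H_\rho = H_{-\rho}$, the hyperplane $H_\rho$ is a wall of $w(\mathcal{C})$ if and only if $\rho = \pm w(\delta)$ for some $\delta \in \Delta$.

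Now I would fix the sign using the hypothesis $\rho \in N(w)$. If $\rho = w(\delta)$ with $\delta \in \Delta \subset \Phi^+$ then $w^{-1}(\rho) = \delta \in \Phi^+$, contradicting $\rho \in w(\Phi^-)$; so necessarily $\rho = -w(\delta) = -w(e_s)$ with $s = s_\delta \in S$. Thus the conjunction ``$\rho \in N(w)$ and $H_\rho$ is a wall of $w(\mathcal{C})$'' is equivalent to $\rho = -w(e_s) \in N(w)$ for some $s \in S$. Finally, $-w(e_s) \in N(w)$ forces $w(e_s) \in \Phi^-$, that is $|ws| < |w|$, so $s$ is a right descent; and Proposition~\ref{prop:inversion_sets} states exactly that $s$ is a descent if and only if $N(w)\setminus\{-w(e_s)\} = N(w')$ for some $w'$, which is the definition of $\rho = -w(e_s)$ being a geometric descent of $w$. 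Chaining these equivalences yields the claim.

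I expect the main obstacle to be the careful justification that the walls of $\mathcal{C}$ are exactly $\{H_\delta \sep \delta \in \Delta\}$ and that $w$ carries them bijectively onto the walls of $w(\mathcal{C})$: this is where the precise hypotheses of the classical representation (linear independence of $\Delta$, and nonemptiness and full-dimensionality of $\mathcal{C}$ within $\Span(\Delta)$) are needed, so that no defining inequality is redundant and every simple hyperplane genuinely bounds a facet. The remaining sign bookkeeping and the passage between the combinatorial descent $s$ and the geometric descent $\rho = -w(e_s)$ are then routine given Proposition~\ref{prop:inversion_sets}.
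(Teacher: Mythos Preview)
The paper does not actually prove Proposition~\ref{prop:tits}: it is stated as a recall of standard material on the Tits cone, with a reference to \cite{abramenko2008buildings}. So there is no ``paper's own proof'' to compare against; your proposal supplies a proof where the paper simply quotes the result.

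Your argument is correct and is essentially the standard one. The first assertion is handled cleanly via the contragredient action and biduality of cones (the linear independence of $\Delta$ in the classical representation being exactly what makes $\mathcal{C}$ nonempty and the defining inequalities irredundant). For the second assertion, your identification of the walls of $w(\mathcal{C})$ with the hyperplanes $H_{w(\delta)}$, $\delta\in\Delta$, is correct and the sign analysis is fine. The only place I would tighten the write-up is the final ``chaining'': you show that $\rho\in N(w)$ together with $H_\rho$ being a wall forces $\rho=-w(e_s)$ with $s$ a combinatorial descent, hence $\rho\in\Gamma_w$ by Proposition~\ref{prop:inversion_sets}; for the converse you should also say explicitly why every $\rho\in\Gamma_w$ is of the form $-w(e_s)$. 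This is immediate (from $N(w')\subsetneq N(w)$ with $|N(w)|-|N(w')|=1$ one gets $w'=ws$ for some $s\in S$, and then $N(w)\setminus N(ws)=\{-w(e_s)\}$), but as written it is not quite contained in the statement of Proposition~\ref{prop:inversion_sets} you invoke. With that one sentence added, the proof is complete.
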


\begin{figure}[h]
	\centering
	\includegraphics[width=\textwidth]{./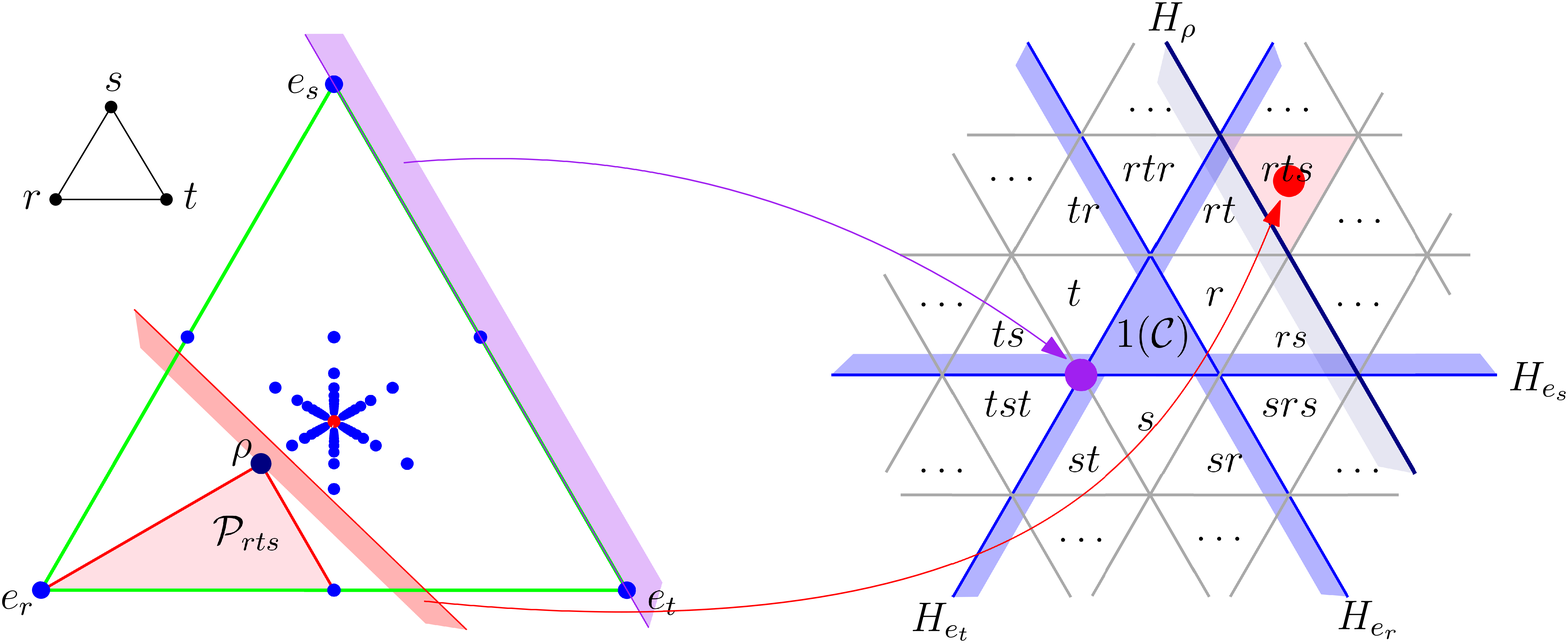}
	\caption{On the left, the projective picture. On the right, the \emph{Coxeter complex} $(U \setminus\{0\}) / \mathbb{R}^+$. Weyl chambers are labeled by their associated element. Half-spaces in the primal are sent to points of the same color in the Coxeter complex. Conversely, the descent $\rho$ of $rts$ is sent to the wall $H_{\rho}$ of $rts(\mathcal{C})$ with $rts(\mathcal{C}) \subset H_{\rho}^-$.}
	\label{fig:duality}
\end{figure}


\section{The Dyer-Hohlweg conjecture}\label{sec:conjecture}

In this section, we state the conjecture, briefly explain its connection with Shi regions, and present the notion of bipodality.

\begin{dftn}
	We say that $a \in \Phi^+$ is a \emph{small root} if for all $b \in \Phi^+\setminus\{a\}$, there is some $w \in W$ such that $a \in N(w)$ and $b \notin N(w)$. The set of small roots is denoted by $\Sigma$. 
	
	For $w \in W$, $\Sigma(w) = \Sigma \cap N(w)$ is the \emph{small inversion set of $w$}. We denote by $\Lambda$ the set of all small inversion sets.
	
	An element $w$ of $W$ is a \emph{low element} if $N(w) = \cone(\Sigma(w)) \cap \Phi^+$. The set of low elements is denoted by $L$.
\end{dftn}

Said in the language of the projective picture, where extreme rays of cones become extreme points of convex sets, an element $w \in W$ is low if and only if the vertices of its inversion polytope are small roots; that is $N^1(w) \subset \Sigma$.
Because we already know that the map $w \longmapsto N(w)$ is injective, the map $\Sigma: w \in L \longmapsto \Sigma(w) \in \Lambda$ is injective as well. Is it surjective? 

\begin{conj}\cite[Conjecture 2, Dyer, Hohlweg '16]{dyer2016small}\label{conj:bij}
	The map $\Sigma: L \longrightarrow \Lambda$ is a bijection between low elements and small inversion sets.
\end{conj}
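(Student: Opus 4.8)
The plan is to prove surjectivity of $\Sigma : L \longrightarrow \Lambda$, injectivity being already recorded in the excerpt. Fix $w \in W$; I must produce a low element $\ell$ with $\Sigma(\ell) = \Sigma(w)$. Set $A_w = \cone(\Sigma(w)) \cap \Phi^+$, a finite subset of $\Phi^+$. The first move is to reduce the whole conjecture to a single \emph{separability} statement: \emph{for every $w$, the set $A_w$ is an inversion set}. Indeed, suppose $A_w = N(\ell)$ for some $\ell$. Since $N(w)$ is convex (\cite[Lemma 2.10]{hohlweg2016inversion}, recalled in \S\ref{part:projective}) and $\Sigma(w) \subseteq N(w)$, we get $A_w = \cone(\Sigma(w)) \cap \Phi^+ \subseteq \cone(N(w)) \cap \Phi^+ = N(w)$, hence $\Sigma(\ell) = \Sigma \cap A_w \subseteq \Sigma \cap N(w) = \Sigma(w)$; the reverse inclusion is immediate from $\Sigma(w) \subseteq A_w$, so $\Sigma(\ell) = \Sigma(w)$. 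Then $N(\ell) = A_w = \cone(\Sigma(w)) \cap \Phi^+ = \cone(\Sigma(\ell)) \cap \Phi^+$ is exactly the definition of a low element, so $\ell \in L$. Thus surjectivity is \emph{equivalent} to the separability of every $A_w$.

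Rather than build a separating hyperplane for $A_w$ by hand, I would produce $\ell$ by descending from $w$ along the weak order $\leq_R$. The key observation is that \emph{removing a non-small root from an inversion set does not change its small inversion set}. Concretely, if $w$ is not low and $\gamma \in \Gamma_w$ is a descent that is \emph{not} small, then by Proposition~\ref{prop:inversion_sets} there is an element $w'$ covered by $w$ with $N(w') = N(w) \setminus \{\gamma\}$, and $\Sigma(w') = \Sigma \cap \bigl(N(w) \setminus \{\gamma\}\bigr) = \Sigma(w)$ since $\gamma \notin \Sigma$. Iterating this and using finiteness of $N(w)$, the walk terminates at some $\ell \leq_R w$ possessing no non-small descent, with $\Sigma(\ell) = \Sigma(w)$ throughout. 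If every such terminal $\ell$ is automatically low, then $N(\ell) = \cone(\Sigma(\ell)) \cap \Phi^+ = A_w$, which simultaneously proves separability of $A_w$ and exhibits the desired low element.

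This reduces the conjecture to one \textbf{key lemma}: \emph{an element with no non-small descent is low}; equivalently, if $N^1(w) \not\subseteq \Sigma$ then some descent of $w$ is non-small. This is the crux, and it is where bipodality enters. The subtlety specific to rank $3$ is that the inversion polytope $\mathcal{P}_w$ is a polygon with possibly many vertices, yet by Proposition~\ref{prop:tits} only those vertices whose hyperplane is a \emph{wall} of $w(\mathcal{C})$ are descents; a priori a non-small vertex could be "buried" (not a wall) while the one or two genuine descents are all small. To rule this out I would work in the Tits cone (\S\ref{part:tits}, \S\ref{part:wiggling}): sliding a separating functional inside $w(\mathcal{C})$ toward a wall isolates a descent, and each edge of $\mathcal{P}_w$ lies in a rank-$2$ slice whose positive roots form a dihedral rainbow. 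Bipodality describes precisely which roots of such a slice are small—they occupy the two extreme "pods"—and propagating this constraint from each vertex to its neighbours around $\partial\mathcal{P}_w$ should force the collar $N(w) \setminus A_w$ of non-small roots to expose a non-small wall whenever a non-small vertex is present. The degenerate cases, namely finite standard or reflection subgroups where $|\Gamma_w| = |S|$ and a maximal element appears, I would treat separately via Lemma~\ref{lem:sources}, which pins down these "sources" at which the descent walk can begin.

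The hard part will be exactly this last propagation step: converting the rank-$2$ bipodality into the global assertion that a non-small vertex of $\mathcal{P}_w$ always produces a non-small descent. The difficulty is the bookkeeping of how the finitely many collar roots interact with the far roots of $\Phi^+ \setminus N(w)$ accumulating on $Q_1$ near the vertices of $\mathcal{P}_w$, and in applying bipodality \emph{uniformly} enough to cover every vertex. I expect the heart of the argument to be a finite case analysis on the rank-$2$ configuration of the two edges meeting at a non-small vertex, showing geometrically that the separating wall can always be tilted to shed a non-small root without engulfing either a small root or a far root.
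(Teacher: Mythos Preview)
Your overall strategy coincides with the paper's: the descent walk that strips non-small descents is exactly Lemma~\ref{lem:reduction}, and your ``key lemma'' (\emph{$\Gamma_w \subset \Sigma$ implies $N^1(w) \subset \Sigma$}) is precisely what the paper spends Section~\ref{sec:result} proving. So the reduction is right and the target is right.

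Where you diverge is in the mechanism for the key lemma. You propose a local case analysis at a non-small vertex and a tilting argument to exhibit a non-small descent directly. The paper instead organizes the propagation globally via a directed graph: define the \emph{bipodality graph} $G_{bip}(w)$ on the vertices of $\mathcal{P}_w$, with an edge $a \to b$ whenever the polytope edge $[a,b]$ is an arrow in the sense of Definition~\ref{def:bipodality}. Then (i) every removal order orients \emph{all} polytope edges consistently with the arrows, so $G_{bip}(w)$ sits inside an acyclic removal graph and is itself acyclic (Proposition~\ref{prop:acyclic}); and (ii) every source of $G_{bip}(w)$ is a descent (Proposition~\ref{prop:sources}). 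From (i)+(ii), every vertex is reachable from $\Gamma_w$ by following arrows backwards, and bipodality of $\Sigma$ then pushes smallness from $\Gamma_w$ to all of $N^1(w)$. This graph formulation replaces your ``propagation around $\partial\mathcal{P}_w$'' with a clean acyclicity/source statement; it is what makes the argument go through without an ad hoc local analysis.

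One correction: you invoke Lemma~\ref{lem:sources} as covering ``degenerate cases'' where a maximal element appears. In the paper that lemma is not a side case but the core of the rank-3 step. A source of $G_{bip}(w)$ either has only outgoing arrows (then any sweeping-hyperplane removal order meets it first, so it is a descent), or it is incident to a \emph{complete} edge $[a,b]$; Lemma~\ref{lem:sources} handles this second situation by showing that the line $(a,b)$ is a weak separation line, and the contradiction with $|\Gamma_{w'}| = 3$ forcing a maximal element is what rules out stray roots in the rotation region $J$. That argument is exactly where rank~$3$ is used and where the difficulty you anticipate (roots accumulating on $Q_1$) is confronted, so it deserves to be the centerpiece rather than a footnote.
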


Let us restate this in terms of \emph{Shi regions}, that is, the connected components of $U \setminus \mathcal{A}_{\Sigma}$ where $U$ is the Tits cone and $\mathcal{A}_{\Sigma} = \{H_{a} \sep a \in \Sigma\}$ is the \emph{Shi arrangement} (this is equivalent to the usual definition, see \cite[p.123]{bjorner2006combinatorics}). 
As the Weyl chambers are in bijection with inversion sets, the Shi regions are naturally in bijection with the small inversion sets. Considering this, the question becomes "are low elements in bijection with Shi regions?".

In the same paper where Dyer \& Hohlweg introduce the conjecture (\cite{dyer2016small}), they define the notion of \emph{bipodality} and prove that the set of small roots is bipodal. This property will be central in the proof in the next section. We illustrate it in Figure~\ref{fig:bipodality}.

\begin{figure}[h] 
	\centering
	\includegraphics[width=0.45\textwidth]{./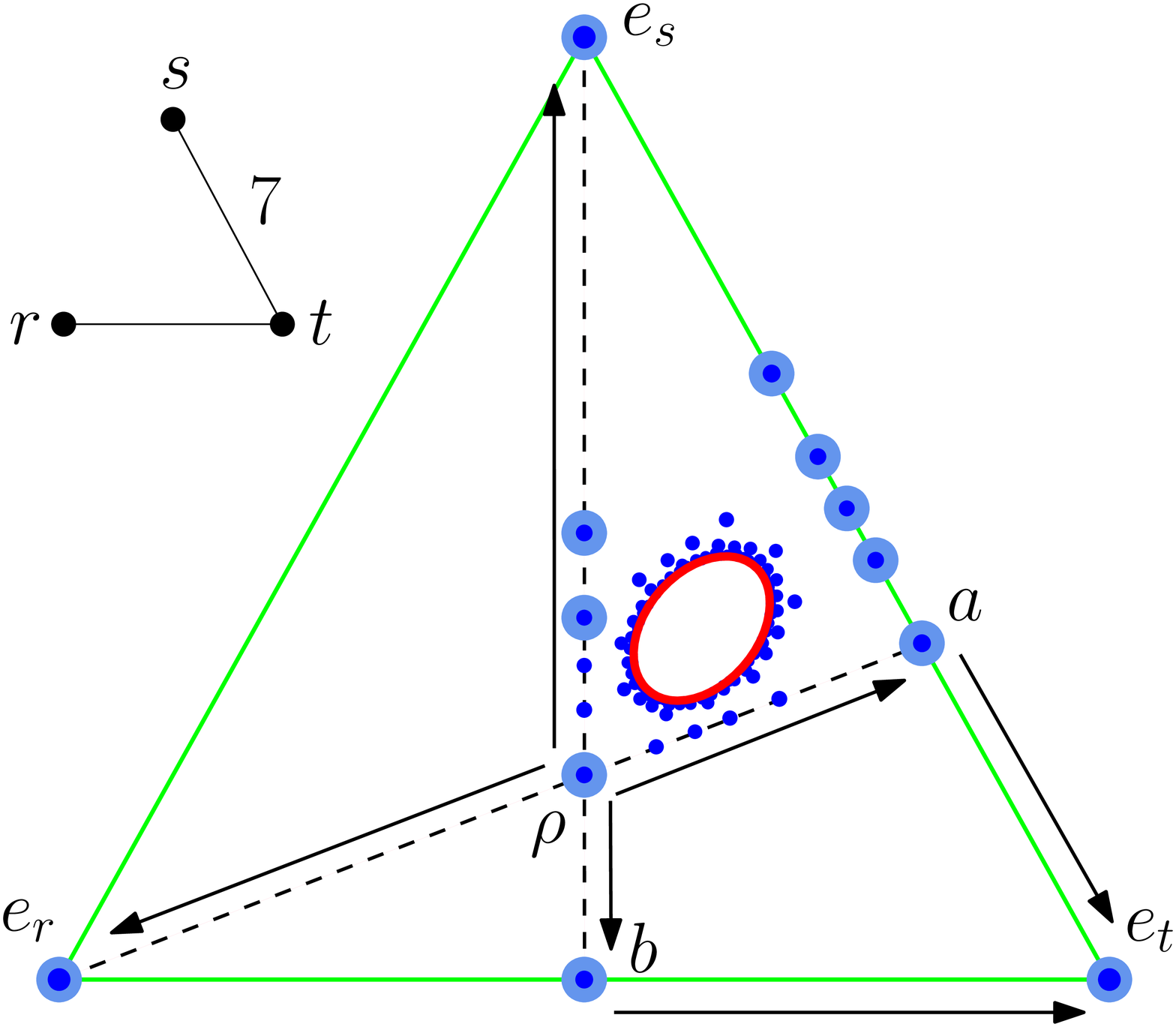}
	\caption{Bipodality of the small roots (with the blue halo): because the small root $\rho$ is an "internal" root of the dashed lines, it imposes that the "extremal" roots on these lines (here $a$ and $e_r$, $b$ and $e_s$) are small too. In turn, this forces $e_t$ to be small.}
	\label{fig:bipodality}
\end{figure}

\begin{dftn}\label{def:bipodality}
	Let $d$ be a line in the projective picture and, as in \S\ref{part:roots}, let $H_d$ be the reflection subgroup corresponding to $\Phi^+_d$, the roots contained in $d$.
	We say that the pair $(a, b)$ is an \emph{arrow} from $a$ to $b$
	if $b \in \Delta_d$ is a simple root 
	and $a \in \Phi_d^+ \setminus \Delta_d$ is not, 
	and denote it $a \rightarrow b$.
	A subset $A \subset \Phi^+$ is said to be \emph{bipodal} if for any arrow $a \rightarrow b$, $a \in A$ implies $b \in A$.
\end{dftn}

\begin{thm}\cite[Dyer, Hohlweg '16, Theorem 4.18]{dyer2016small}
	$\Sigma$ is always bipodal.
\end{thm}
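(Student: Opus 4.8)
The plan is to localize the statement to a single rank-$2$ reflection subgroup and then read smallness off the dominance order. An arrow $a\to b$ is by definition internal to a line $d$: both $a$ and $b$ lie in $\Phi_d^+$, with $b\in\Delta_d$ extreme and $a\in\Phi_d^+\setminus\Delta_d$ not. By the restriction property of \S\ref{part:roots}, for every $w\in W$ the trace $N(w)\cap\Phi_d$ is an inversion set of the dihedral group $H_d$; this is the bridge carrying information between $\Phi^+$ and $\Phi_d$. Writing $\alpha\dom\beta$ for the dominance relation ($\alpha\in N(w)\Rightarrow\beta\in N(w)$ for all $w$), the hypothesis $a\in\Sigma$ says exactly that $a$ dominates no other positive root, i.e. $a$ is minimal for $\dom$, and I must show each extreme $b\in\Delta_d$ is minimal as well.

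First I would dispose of the case where $H_d$ is infinite. There the roots of $\Phi_d^+$ accumulate toward the two isotropic rays, and every inversion set of $H_d$ containing a given internal root is a prefix of one of the two reflection orders, hence contains the element of $\Delta_d$ lying on that side. Through the restriction property this reads $a\in N(w)\Rightarrow b_0\in N(w)$ for all $w\in W$, where $b_0\in\Delta_d$ is that extreme root; thus $a\dom b_0$ with $b_0\neq a$, so no internal root of an infinite $H_d$ is small and the bipodal implication holds vacuously on such lines. Only finite (spherical) $H_d$ remain.

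When $H_d$ is finite its form is definite, so any two distinct roots of $\Phi_d$ pair to $B(\cdot,\cdot)<1$ and dominance is trivial inside $\Phi_d$; hence if $b$ dominated some $c$, restricting to $H_d$ would force $c\in\Phi^+\setminus\Phi_d$. I argue by contraposition: fix a hypothetical $b\dom c$ with $c$ external, and contradict it by exhibiting $w$ with $b\in N(w)$ and $c\notin N(w)$. Since $a$ is small and $c\neq a$, choose $w_0$ with $a\in N(w_0)$ and $c\notin N(w_0)$. As $a$ is internal, $N(w_0)\cap\Phi_d$ is a dihedral inversion set containing $a$, hence containing at least one extreme root. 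If that root is $b$, then $w_0$ already contradicts $b\dom c$. The remaining case is that $N(w_0)\cap\Phi_d$ contains the opposite extreme root $b'$ but not $b$.

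The main obstacle is exactly this last case: one must swing the separating datum across the fan $\cone(\Phi_d)$ so as to invert $b$ in place of $b'$ while never inverting the fixed external $c$. I would carry this out in the Tits cone of \S\ref{part:tits}: the chamber $w_0(\mathcal{C})$ lies in $H_{b'}^-\cap H_a^-$ but in $H_b^+\cap H_c^+$, and I would rotate the supporting functional through the pencil determined by $V_d$, exploiting that $b$ spans an extreme ray of $\cone(\Phi_d)$ (so $H_b$ is an outer wall, "more separable" than the interior $H_a$) to reach a chamber inverting $b$ with $c$ still strictly positive. Keeping $c$ outside throughout the motion is the delicate point and is where the quantitative content sits. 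When $a$ and $b$ lie in the same $H_d$-orbit, say $a=g_0b$, applying $g_0$ turns the inequality $B(b,c)\geq1$ forced by $b\dom c$ into $B(a,g_0c)\geq1$, pushing the obstruction onto $a$; the genuinely new difficulty is the cross-orbit situation (possible when $H_d$ has even order), where this transport is unavailable and the hyperplane must be moved by hand. I expect Dyer--Hohlweg's finer bookkeeping on depths and on the geometry of $\widehat{\Phi}$ near $Q_1$ to be what makes this maneuver rigorous.
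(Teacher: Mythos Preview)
The paper does not contain a proof of this statement: it is quoted verbatim as \cite[Theorem~4.18]{dyer2016small} and used as a black box in the strategy of Section~\ref{sec:result}. There is therefore nothing in the present paper to compare your attempt against.

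As for the attempt itself, it is an outline rather than a proof, and you say so. The localization to a dihedral subgroup and the observation that the infinite-$H_d$ case is vacuous (an internal root of an infinite dihedral always dominates one of the two extremes, hence cannot be small) are correct and are indeed the natural first moves. The finite-$H_d$ case, however, is left unfinished: in the situation where $N(w_0)\cap\Phi_d$ contains $b'$ but not $b$, you propose to rotate a separating functional through the pencil of $V_d$ while keeping the external root $c$ on the positive side, and you explicitly defer the verification that this can be done to ``Dyer--Hohlweg's finer bookkeeping.'' That step is the whole content of the theorem in the spherical case, so what you have written is a reduction, not a proof. If you want to complete it along these lines you will need a quantitative control---for instance via the $\infty$-depth or the explicit dominance inequalities in rank~$2$ subsystems---rather than a purely topological wiggling argument; the difficulty is real, not just a matter of writing things out.
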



\section{The result}\label{sec:result}

We are now in position to state our main result -- Conjecture~\ref{conj:bij} holds in rank 3 -- and sketch its proof.
\begin{prop}
	In rank 3, $\Sigma: L \longrightarrow \Lambda$ is surjective.
\end{prop}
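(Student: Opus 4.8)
The plan is to produce, for each small inversion set $X \in \Lambda$, an explicit low element mapping to it. Since $X \in \Lambda$ we may write $X = \Sigma(v)$ for some $v \in W$, and the natural candidate is the set $N_0 = \cone(X) \cap \Phi^+$, which in the projective picture is $\conv(X) \cap \widehat{\Phi}$. I would show that $N_0$ is an inversion set, and then argue that the element $w$ it determines through Proposition~\ref{prop:inversion_sets} is automatically \emph{low} and satisfies $\Sigma(w) = X$. The purpose of this reduction is to isolate the one genuinely difficult requirement — that $N_0$ be \emph{separable} — since finiteness, lowness, and the equality $\Sigma(w)=X$ will all come for free.

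Let me dispatch the free parts first. Because $N(v)$ is convex in the sense of \cite[Lemma 2.10]{hohlweg2016inversion}, that is $\cone(N(v)) \cap \Phi^+ = N(v)$, and since $X = \Sigma(v) \subseteq N(v)$, we obtain $N_0 = \cone(X) \cap \Phi^+ \subseteq N(v)$; in particular $N_0$ is finite. The same containment yields $\Sigma \cap N_0 = X$: the inclusion $X \subseteq \Sigma \cap N_0$ is immediate, and conversely any $a \in \Sigma \cap N_0$ lies in $N_0 \subseteq N(v)$, whence $a \in \Sigma \cap N(v) = X$. Finally, granting that $N_0 = N(w)$ is an inversion set, we have $\mathcal{P}_w = \conv(N_0) = \conv(X)$, so the vertices of $\mathcal{P}_w$ are among the points of the finite set $X \subseteq \Sigma$; thus $N^1(w) \subset \Sigma$ and $w$ is low, while $\Sigma(w) = \Sigma \cap N(w) = \Sigma \cap N_0 = X$. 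Everything therefore rests on separating $N_0$ from $\Phi^+ \setminus N_0$ by a single hyperplane.

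This is the step I expect to be the main obstacle, and the one where rank $3$ and bipodality enter. The strategy (the wiggling of \S\ref{part:wiggling}) is to start from a hyperplane $H$ separating $N(v)$ from $\Phi^+ \setminus N(v)$ — equivalently a point in the dual/Coxeter complex — and to move it continuously so as to push the roots of $N(v) \setminus N_0$ across to the complement side, while keeping all of $N_0$ on the near side and admitting no root of $\Phi^+ \setminus N(v)$. The key observation is that every root of $N(v) \setminus N_0$ is \emph{non-small}: if $a \in N(v)$ were small then $a \in \Sigma(v) = X \subseteq N_0$. Hence the roots to be expelled are exactly the non-small ones, whereas the roots to be retained — the vertices of $\conv(X)$ — are small. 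In the $2$-dimensional projective picture a separating hyperplane is a line, so the motion of $H$ is governed by finitely many crossing events, each occurring when the line becomes flush with an edge of the polygon $\conv(X)$ or aligns with a root of $\Phi^+$, making the sweep a finite and tractable analysis.

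The role of bipodality is to forbid the bad crossings. Suppose the wiggling stalls, i.e. some non-small root $a \in N(v) \setminus N_0$ cannot be expelled without simultaneously losing a retained root of $N_0$. Reading this blockage along the line $d$ through $a$ and the adjacent vertices of $\conv(X)$, Definition~\ref{def:bipodality} produces an arrow toward a vertex $b$ of $\conv(X)$; since $b \in \Sigma$ and $\Sigma$ is bipodal, the contrapositive of bipodality — smallness propagates \emph{to} the simple roots of $d$, so non-smallness propagates \emph{from} them — forces either the expelled non-small root to have been retained or a retained small vertex to be non-small, a contradiction in either case. The degenerate situations, where the roots captured along a line fill the whole positive system of a finite reflection subgroup, are precisely those in which the restricted element attains full descent set and is maximal in that subgroup; these I would treat separately using Lemma~\ref{lem:sources}. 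Assembling the crossing analysis produces an admissible line strictly separating $N_0$, which completes the proof.
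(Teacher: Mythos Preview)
Your reduction is clean and correct: once separability of $N_0=\conv(X)\cap\widehat\Phi$ is established, finiteness, lowness, and $\Sigma(w)=X$ follow exactly as you say. The paper reaches the same target by a different framing: rather than fixing $N_0$ in advance and proving it separable, it takes any $v$ with $\Sigma(v)=\lambda$, strips off non-small descents until $\Gamma_w\subset\Sigma$ (Lemma~\ref{lem:reduction}; this is precisely your ``wiggling''), and then proves that the resulting $w$ already satisfies $N^1(w)\subset\Sigma$. The two framings converge on the same element, but the paper's framing is what makes the hard step tractable, because at every stage one is working with an honest inversion polytope.

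The genuine gap is in your separability argument. Wiggling can only expel \emph{descents}; when it stalls you are sitting at some $w$ with $\Gamma_w\subset\Sigma$, and the obstruction to reaching $N_0$ is a vertex $a\in N^1(w)$ that is non-small yet \emph{not a descent}. Your bipodality paragraph does not dispose of this. First, you analyse lines through $a$ and vertices of $\conv(X)$, but the edges that matter are those of $\mathcal{P}_w$, whose other endpoints need not lie in $X$ and need not be small. Second, the direction of the implication is not the one you need: bipodality says that along an arrow $c\to a$ smallness passes from $c$ to $a$, so to force $a\in\Sigma$ you must first know $c\in\Sigma$ --- and nothing in your setup supplies that for the neighbours of $a$ in $\mathcal{P}_w$. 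Third, your invocation of Lemma~\ref{lem:sources} is circular: that lemma is stated for a complete edge of an inversion polygon $\mathcal{P}_w$ and concludes that its endpoint is a descent of $w$; applying it to $\conv(X)$ presupposes that $\conv(X)$ is already an inversion polytope, which is the very thing at issue.

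What is missing is exactly the machinery the paper builds: organise the edges of $\mathcal{P}_w$ (not $\conv(X)$) into the directed bipodality graph $G_{bip}(w)$, show it is acyclic (Proposition~\ref{prop:acyclic}, via a generic sweeping line giving a removal order), and show its sources are descents (Proposition~\ref{prop:sources}; this is where rank~$3$ is genuinely used, splitting into the ``all edges at $v$ are arrows'' case and the ``complete edge'' case handled by Lemma~\ref{lem:sources}). Smallness then propagates from $\Gamma_w\subset\Sigma$ along the arrows to every vertex. Your sketch names several of these ingredients but does not assemble the propagation argument; without it, the claim that the wiggling cannot stall before reaching $N_0$ is unsupported.
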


\begin{proof}[Strategy]
	Recall that an element $w$ is low if and only if the vertices of $\mathcal{P}_w$ are in $\Sigma$. Thus, we can prove this result by exhibiting for any small inversion set $\lambda \in \Lambda$ an element $w$ such that $N^1(w) \subset \Sigma$ and $\Sigma(w) = \lambda$. Our strategy is as follows:
	\begin{enumerate}
		\item Choose $w$ with $\Sigma(w) = \lambda$ such that its descents are small, \emph{.ie.} $\Gamma_w \subset \Sigma$ (Lemma~\ref{lem:reduction}).
		\item Define the \emph{bipodality graph} $G_{bip}(w)$ as the directed graph whose vertices are those of $\mathcal{P}_w$ and whose edges are those of $\mathcal{P}_w$ that are arrows in the sense of definition~\ref{def:bipodality}.
		\item Prove that every vertex on the bipodality graph is accessible from $\Gamma_w$: show that $G_{bip}(w)$ is acyclic (Proposition~\ref{prop:acyclic}) and that its sources lie in $\Gamma_w$ (Proposition~\ref{prop:sources}). By bipodality, this imposes that $N^1(w) \subset \Sigma$, proving that $w$ is a low element. \qedhere
	\end{enumerate}
\end{proof}

The most difficult point is to show that the sources form a subset of the descents. We present here a proof in rank 3 and it is the only part of the proof that does not easily extend to higher ranks. We begin by some remarks on the interaction between moving separations hyperplane and inversion polytopes in \S\ref{part:wiggling}. This gives us some tools to prove that $G_{bip}(w)$ is acyclic in \S\ref{part:acyclic} and to localize the sources of the graph in \S\ref{part:sources}.

\subsection{Wiggling hyperplanes}\label{part:wiggling}
As can be seen on Figure~\ref{fig:duality}, the set of separation hyperplanes $H$ for $N(w)$ is homeomorphic to $w(\mathcal{C}) / \mathbb{R}^+$. 
This shows, for instance, that if $H, H'$ are two separation hyperplanes for $N(w)$, we can continuously move one to the other.
In this spirit, we examine in this section some interactions between roots and moving hyperplanes, beginning with a relaxation of the separability condition.

\begin{dftn} \label{def:weak_separation}
	Let $w$ be an element of $W$ and $\mathcal{P}_w$ its inversion polytope. We say that an affine hyperplane $H \subset H_1$ is a \emph{weak separation hyperplane} for $\mathcal{P}_w$ if $\mathcal{P}_w$ is on one side of $H$, while its complement $\widehat{\Phi} \setminus \mathcal{P}_w$ is strictly on the other side and $H$ does not intersect the normalized isotropic cone.
\end{dftn}

\begin{figure}
	\centering
	\begin{subfigure}[b]{0.3\textwidth}
		\centering
		\includegraphics[width=\textwidth]{./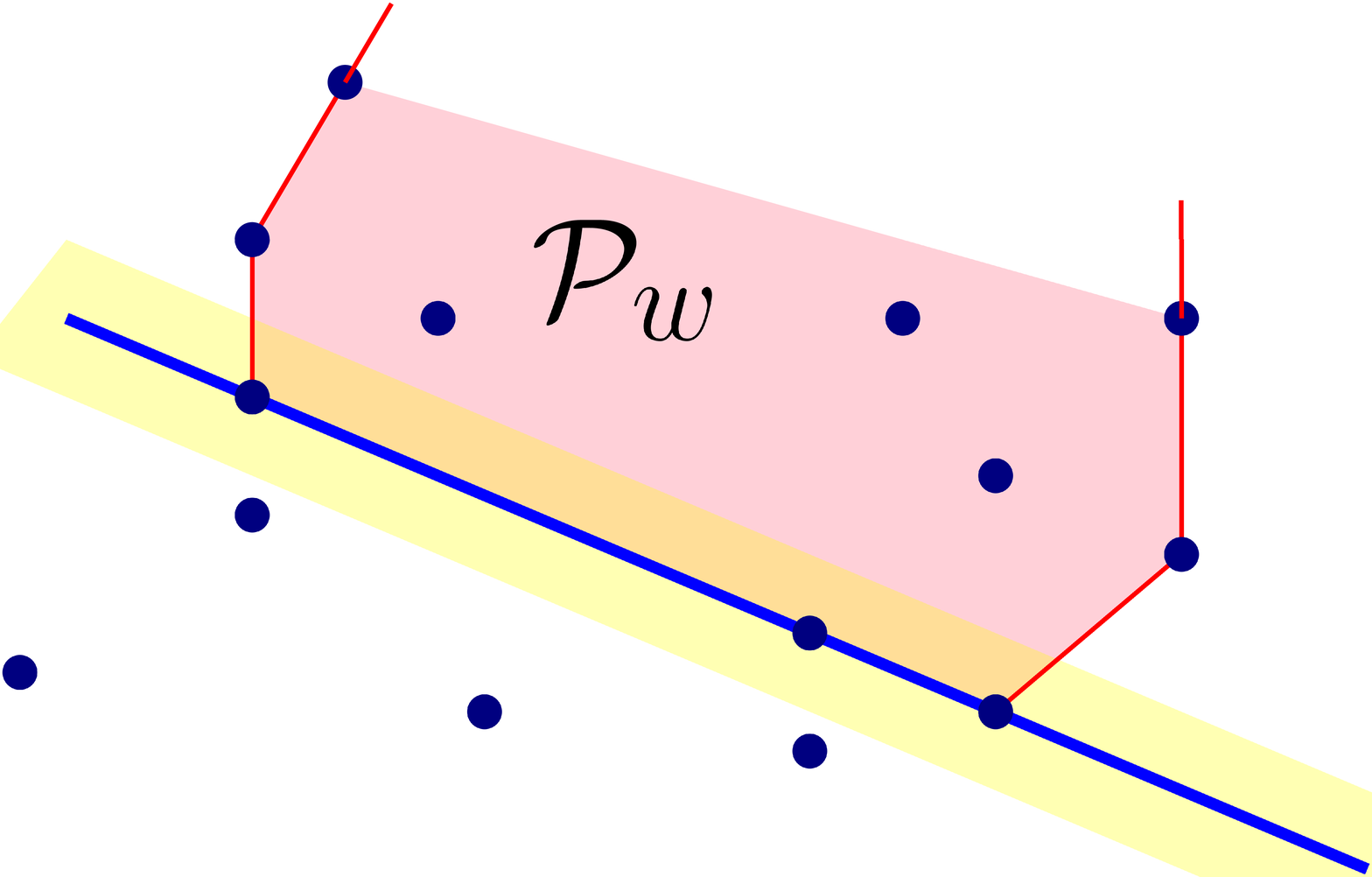}
		\caption{Weak separation...}
		\label{fig:weak_separation}
	\end{subfigure}
	\hfill
	\begin{subfigure}[b]{0.3\textwidth}
		\centering
		\includegraphics[width=\textwidth]{./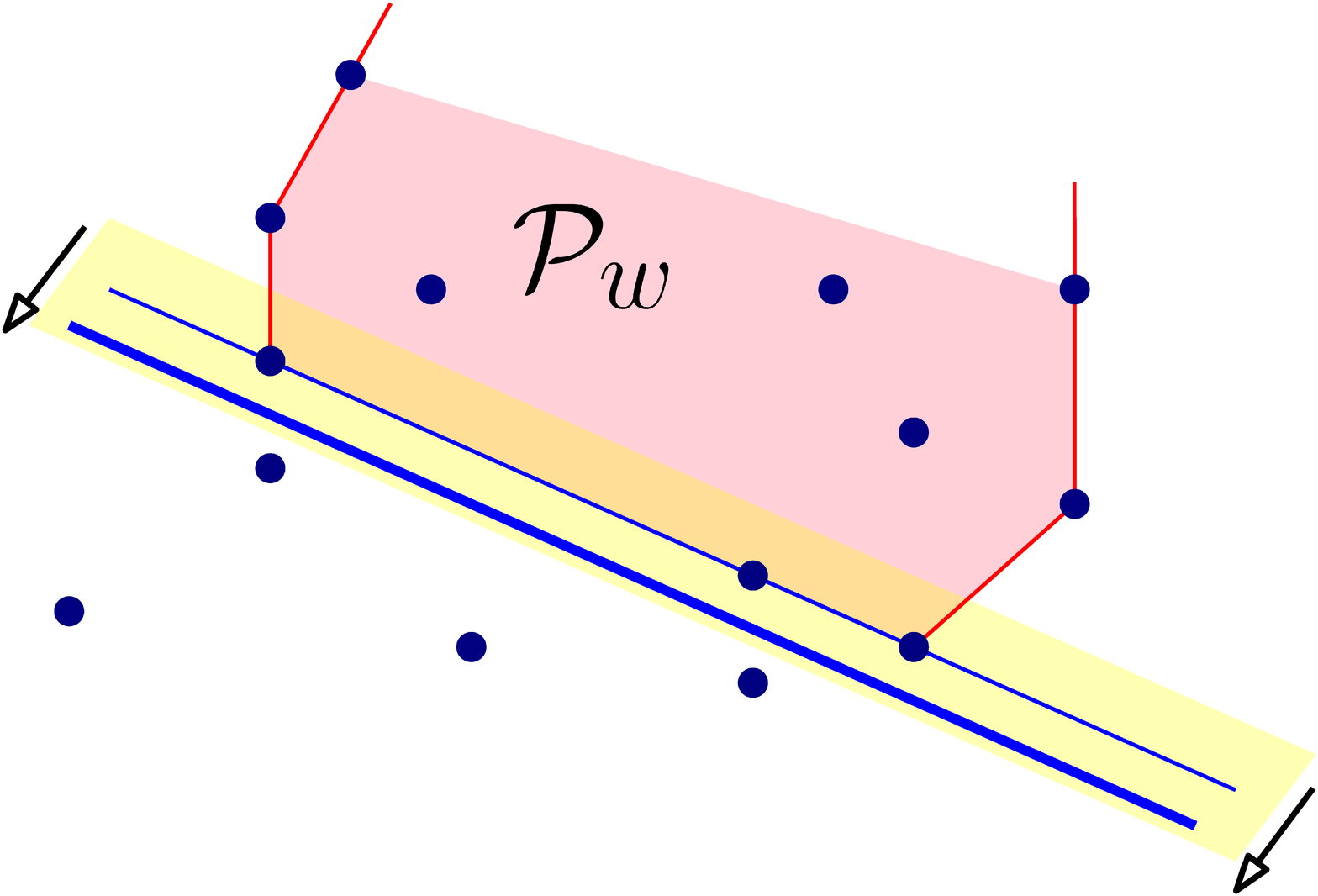}
		\caption{...to regular...}
		\label{fig:strict_separation}
	\end{subfigure}
	\hfill
	\begin{subfigure}[b]{0.3\textwidth}
		\centering
		\includegraphics[width=\textwidth]{./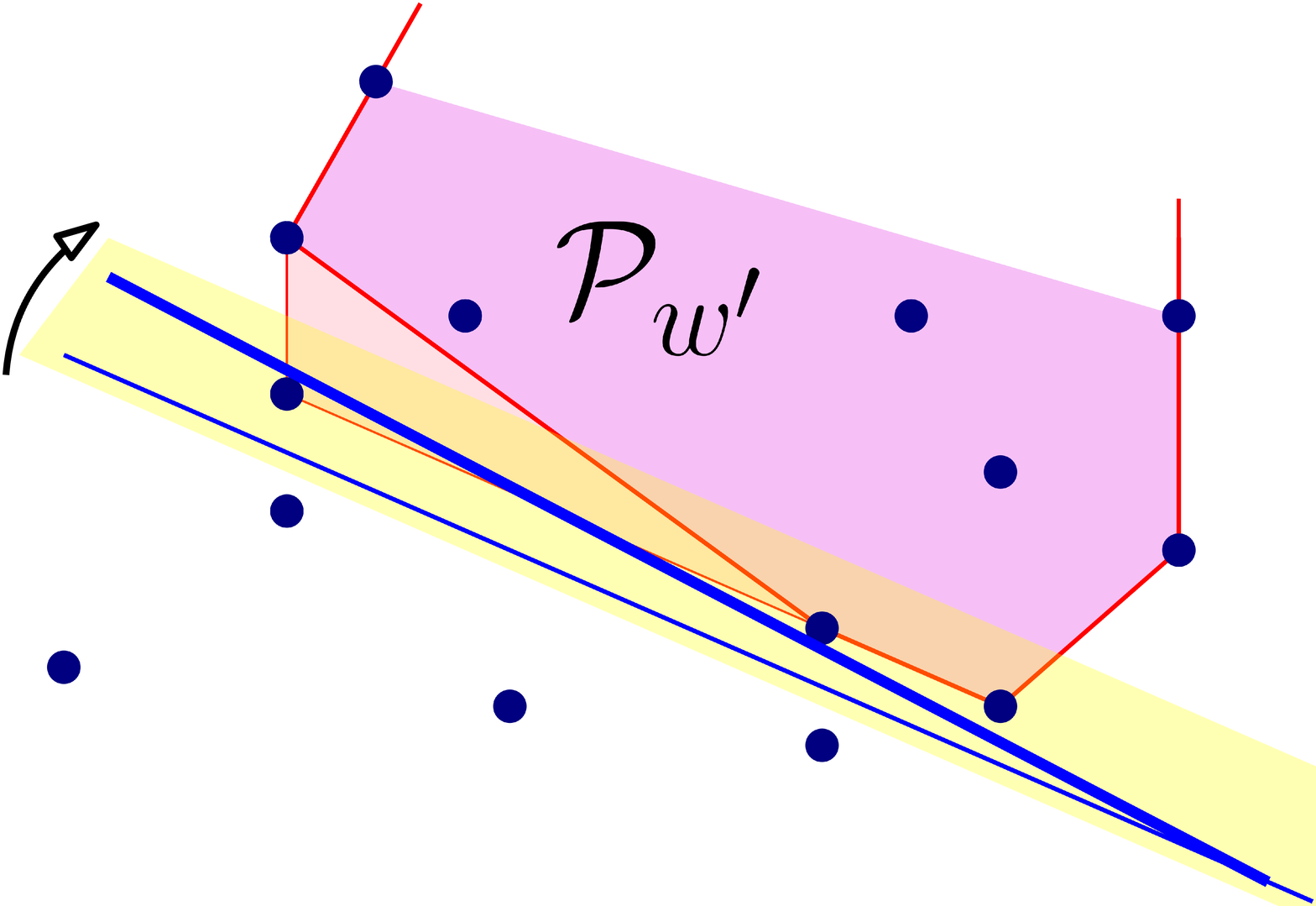}
		\caption{...to removing a descent.}
		\label{fig:tilt_descentes}
	\end{subfigure}
	
	\hspace{2mm}
	
	\begin{subfigure}[b]{0.25\textwidth}
		\centering
		\includegraphics[width=\textwidth]{./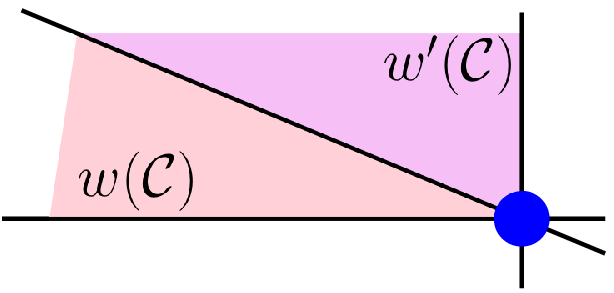}
	\end{subfigure}
	\hfill
	\begin{subfigure}[b]{0.25\textwidth}
		\centering
		\includegraphics[width=\textwidth]{./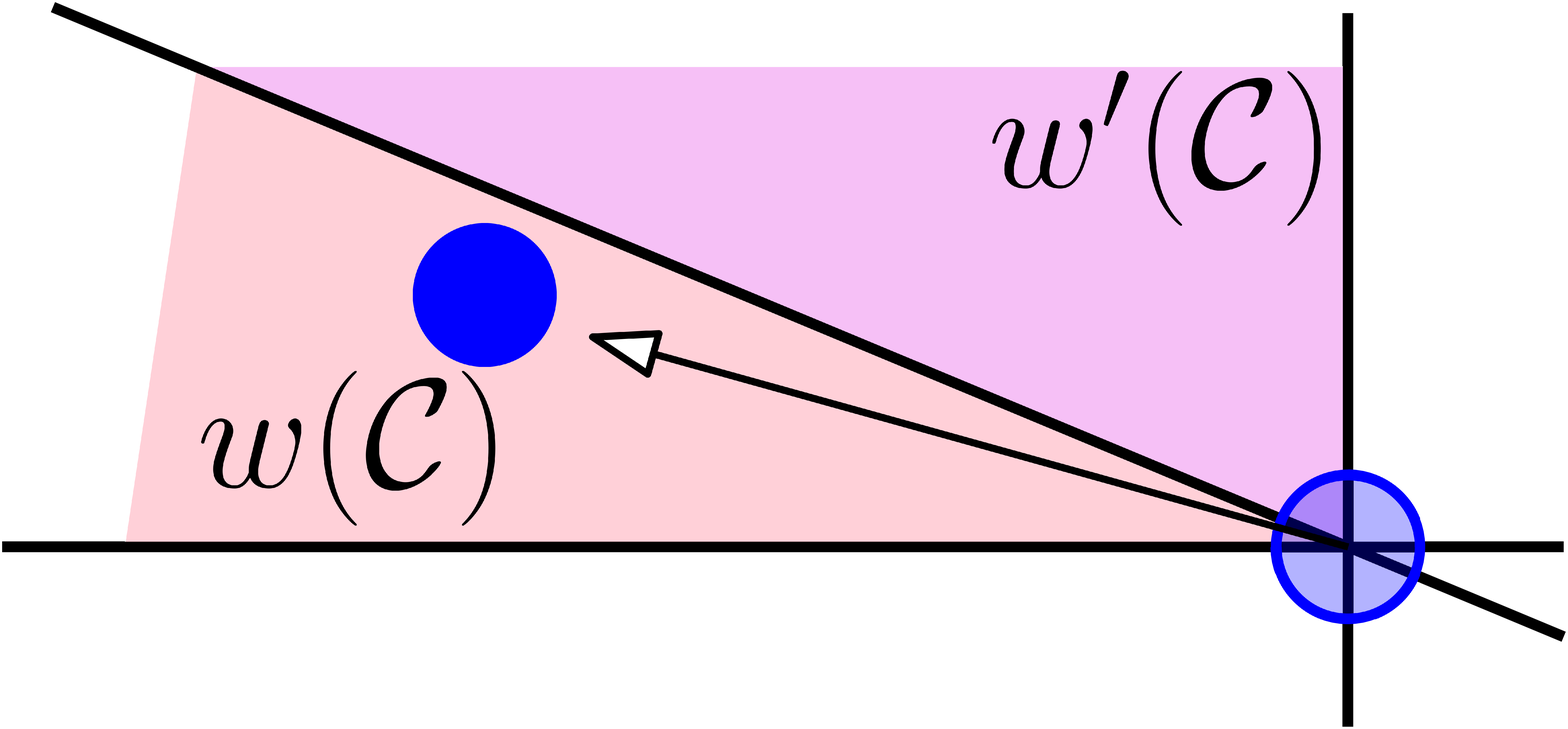}
	\end{subfigure}
	\hfill
	\begin{subfigure}[b]{0.25\textwidth}
		\centering
		\includegraphics[width=\textwidth]{./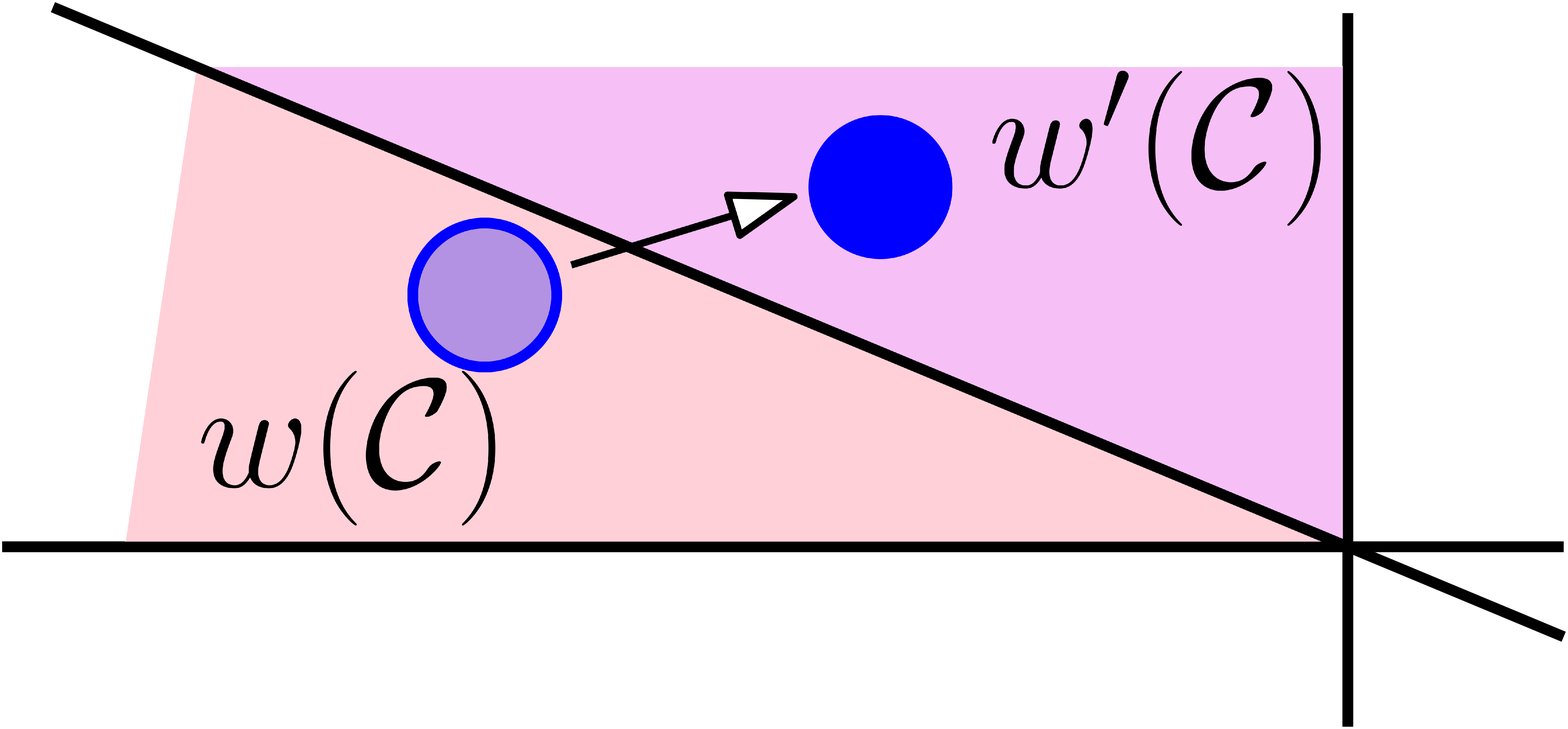}
	\end{subfigure}
	\quad
	\caption{Wiggling hyperplanes : movements of the separation hyperplane on the top row (primal) correspond to a movement of the associated point of the Coxeter complex in the bottom row (dual). The root-free envelope is figured in yellow.}
	\label{fig:weaker_is_better}
\end{figure}

For finite subsets of $\widehat{\Phi}$, this is equivalent to the regular separability condition: a weak separation hyperplane $H$ does not cross the isotropic cone so it is at a strictly positive distance of the roots it does not contain. This gives us a root free envelope (fig. \ref{fig:weak_separation}) in which we may translate $H$ to make it into a strict separation hyperplane (fig. \ref{fig:strict_separation}). Because the set $\widehat{\Phi}$ is bounded, we may actually exit this envelope, as long as we do it "far enough": we can tilt $H$ to remove the vertices of $\conv(H \cap \widehat{\Phi})$ (fig. \ref{fig:tilt_descentes}). This gives us indications on the descents of an element:

\begin{lem}\label{lem:weaker_is_better}
	Let $w$ be an element of $W$, and $\mathcal{P}_w$ its inversion polytope. We suppose that $H$ is a weak separation hyperplane for $\mathcal{P}_w$ containing a face $F$ of $\mathcal{P}_w$. Then the vertices of $F$ are in $\Gamma_{w}$.
\end{lem}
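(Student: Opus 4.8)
The plan is to show, for each vertex $v$ of $F$, that $N(w) \setminus \{v\}$ is again an inversion set; by the characterization of \S\ref{part:inversions} (a finite subset of $\Phi^+$ is an inversion set if and only if it is separable) together with the definition of $\Gamma_w$, this says precisely that $v \in \Gamma_w$. Since $N(w)$ is finite, $N(w)\setminus\{v\}$ is finite too, so everything reduces to producing an affine hyperplane $H' \subset H_1$ that strictly separates $N(w)\setminus\{v\}$ from its complement $(\widehat{\Phi}\setminus N(w)) \cup \{v\}$ in $\widehat{\Phi}$. I will obtain $H'$ by a small tilt of the given weak separation hyperplane $H$, as suggested by the progression in Figure~\ref{fig:weaker_is_better}. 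First note that the roots lying on $H$ are exactly those of $N(w)$ that lie in $F$: no root of $\widehat{\Phi}\setminus\mathcal{P}_w$ can meet $H$, since it is strictly on the far side, and $\mathcal{P}_w \cap \widehat{\Phi} = N(w)$. As $H$ avoids $Q_1$ while $\widehat{\Phi}$ accumulates only on $Q_1$, this set $H \cap \widehat{\Phi}$ is finite, and the vertices of $F$ belong to it.

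To build $H'$ I use that $v$ is a \emph{vertex} of $F$. Working inside $H$, choose an affine hyperplane $L \subset H$ (a subspace of codimension $2$ in $H_1$) that strictly separates $v$ from the finitely many other roots of $H \cap \widehat{\Phi}$; such an $L$ exists because $v$ is an extreme point of the polytope $F$. Now tilt $H$ about the axis $L$, that is, rotate through the pencil of hyperplanes of $H_1$ containing $L$, in the direction that moves $v$ to the far (complement) side while the opposite half of $H$ — which contains every other root of $H\cap\widehat{\Phi}$ — tilts towards the near side. Write $H_\theta$ for the rotated hyperplane, $\theta$ being the (small) angle.

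It remains to check that for $\theta$ small enough $H':=H_\theta$ separates as required, and this is where the two compactness inputs enter. The finitely many roots of $N(w)$ that are not on $H$ sit strictly on its near side, and the roots of $H\cap\widehat{\Phi}$ other than $v$ lie on the half that tilts inward; hence all of $N(w)\setminus\{v\}$ lands strictly on the near side for small $\theta$, while $v$ crosses strictly to the far side. The delicate point — and the main obstacle — is the \emph{infinite} family $\widehat{\Phi}\setminus N(w)$: one must rule out that the tilt drags some complement root across $H'$. Here the weak separation hypothesis is exactly what is needed. Since $H$ does not meet $Q_1$ and the only accumulation points of $\widehat{\Phi}$ lie on $Q_1$, the complement roots sit at distance at least some $\varepsilon>0$ from $H$ on its far side, giving the root-free envelope of Figure~\ref{fig:weak_separation}; and since $\widehat{\Phi}\subset\conv(\Delta)$ is bounded, the displacement of any point of $\widehat\Phi$ under the tilt is bounded by a quantity tending to $0$ with $\theta$, uniformly over $\conv(\Delta)$. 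Choosing $\theta$ below the resulting threshold keeps every complement root strictly on the far side. Then $H'$ strictly separates $N(w)\setminus\{v\}$ from $(\widehat{\Phi}\setminus N(w))\cup\{v\}$, so $N(w)\setminus\{v\}$ is an inversion set and $v\in\Gamma_w$, as claimed.
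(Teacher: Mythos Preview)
Your argument is correct and follows the same line as the paper's own justification: use that $H$ avoids $Q_1$ to get a positive-distance root-free envelope on the complement side, then tilt $H$ about an axis inside $H$ so that the chosen vertex $v$ crosses to the complement side while all other roots stay put, using boundedness of $\widehat\Phi\subset\conv(\Delta)$ to make the displacement uniformly small. The paper phrases this as ``translate within the envelope, then tilt far enough'' (Figure~\ref{fig:weaker_is_better}) whereas you do a single rotation about a well-chosen codimension-$2$ axis $L$; these are the same manoeuvre, and your version spells out the two compactness inputs (finiteness of $N(w)$ for the near side, the envelope $\varepsilon>0$ for the far side) more explicitly than the paper's sketch.
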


This lemma expresses a more general idea: because small perturbations of a point in the Coxeter complex correspond to small perturbations of a separation hyperplane, \emph{we can choose to enter a chamber through one of its walls}.

\subsection{$G_{bip}(w)$ is acyclic.}\label{part:acyclic}
Let $\lambda \in \Lambda$ be a small inversion set, let us show that we can choose $w \in W$ with $\Sigma(w) = \lambda$ and $\Gamma_w \subset \Sigma$, as per the first point of our strategy.
Indeed, if it is not the case then remove $\gamma \in \Gamma_{w} \setminus \Sigma$. 
Notice that the resulting set of roots is still the inversion set of some $w'$ and that the small inversion set is unchanged. 
Repeat. 
Because the initial $w$ is of finite length, this eventually terminates. We obtained the following lemma:

\begin{lem}\label{lem:reduction} 
	Let $\lambda \in \Lambda$ be a small inversion set. Then there exists some $w \in W$ such that $\Sigma(w) = \lambda$ and $\Gamma_w \subset \Sigma$.
\end{lem}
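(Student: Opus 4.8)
The plan is to prove Lemma~\ref{lem:reduction} by an iterative reduction on the length of $w$, exactly as the surrounding text suggests, making each step rigorous. Let me sketch the argument.

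=== PROOF PROPOSAL ===

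\begin{proof}[Proof sketch]
The plan is to start from any element $w_0$ realizing $\lambda$ as its small inversion set and then repeatedly strip off non-small descents until none remain, all the while preserving the small inversion set. First I would establish that the starting point exists: since $\lambda \in \Lambda$, by definition $\lambda = \Sigma(w_0) = \Sigma \cap N(w_0)$ for some $w_0 \in W$, so the set of candidates $\{w \in W \sep \Sigma(w) = \lambda\}$ is nonempty. Among these I would pick $w$ and ask whether $\Gamma_w \subset \Sigma$; if so we are done, so assume there is a descent $\gamma \in \Gamma_w \setminus \Sigma$.

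The core of the argument is the single reduction step. By Proposition~\ref{prop:inversion_sets}, since $\gamma \in \Gamma_w$ is a descent, the set $N(w) \setminus \{\gamma\}$ is again a genuine inversion set, say $N(w) \setminus \{\gamma\} = N(w')$ for some $w' \in W$; moreover $|w'| = |w| - 1$ because removing a single descent-root corresponds to multiplying by the generator $s = s_\gamma$ on the appropriate side, decreasing length by one. The crucial point is that the small inversion set is unchanged: $\Sigma(w') = \Sigma \cap N(w') = \Sigma \cap (N(w) \setminus \{\gamma\}) = (\Sigma \cap N(w)) \setminus \{\gamma\} = \Sigma(w) \setminus \{\gamma\}$, and since $\gamma \notin \Sigma$ we have $\gamma \notin \Sigma(w)$, so $\Sigma(w') = \Sigma(w) = \lambda$. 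Thus $w'$ is again a candidate realizing $\lambda$, but strictly shorter.

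Finally I would wrap this into a terminating induction. The length $|w|$ is a nonnegative integer that strictly decreases at each reduction step while $\Sigma(\cdot) = \lambda$ is maintained, so the process cannot continue indefinitely; it halts precisely when no non-small descent remains, i.e. when $\Gamma_w \subset \Sigma$. The resulting element satisfies both required conditions. I do not expect any serious obstacle here: the only subtle point to verify carefully is that deleting the root $\gamma$ from $N(w)$ really does produce a \emph{shorter} element rather than merely some element, which follows because a descent corresponds to a wall of the Weyl chamber (Proposition~\ref{prop:tits}) and crossing that wall back into the covered chamber decreases length by exactly one. Everything else is bookkeeping, so the lemma follows.
\end{proof}
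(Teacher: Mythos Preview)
Your proposal is correct and follows essentially the same argument as the paper: start from any $w$ with $\Sigma(w)=\lambda$, repeatedly remove a non-small descent (which yields another inversion set with the same small part), and terminate by the finiteness of $|w|$. The only quibble is notational: the descent $\gamma$ need not be simple, so ``$s=s_\gamma$'' is not a generator of $S$; rather $\gamma=-w(e_s)$ for some $s\in S$ and $w'=ws$, but this does not affect the argument.
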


We now want to show that every vertex of $\mathcal{P}_w$ is accessible from $\Gamma_w$.
It is enough to prove that $G_{bip}(w)$ is acyclic and that the sources of $G_{bip}(w)$ (meaning the vertices connected only to outward edges) are a subset of $\Gamma_w$.
Indeed, to find a path from $\Gamma_w$ to a vertex $v$ we just have to follow the following process: if $v$ is a source, stop. 
Else, $v$ must be connected by an inward edge to $v'$, replace $v$ by $v'$ and iterate. 
Since there is no cycle, this terminates on a source, which is in $\Gamma_w$.

We now fix the $w$ obtained from Lemma~\ref{lem:reduction} for the remainder of this paper. Our next step is to show that the bipodality graph is acyclic. Let us first prove the following useful lemma.

\begin{lem}\label{lem:removal_order}
	We say that a permutation $R = (\rho_1, \rho_2,\dots, \rho_{|w|})$ of $N(w)$ is a \emph{removal order} if for all $i \in \{1,\dots, |w|\}$, $N(w) \setminus \{\rho_j \sep i \leq j \} = N(w_i)$ for some $w_i$. Let $[a, b]$ be an edge of $\mathcal{P}_w$ with $a \rightarrow b$. Then the roots on $[a, b]$ are removed from $a$ to $b$.
\end{lem}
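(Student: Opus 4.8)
\emph{Proof plan.} The plan is to localize everything to the rank-$2$ reflection subgroup $H_d$ attached to the line $d$ supporting the edge $[a,b]$, where inversion sets and descents are completely explicit. First I would identify the roots lying on the edge. Since $[a,b]$ is an edge of $\mathcal{P}_w$ we have $\mathcal{P}_w \cap d = [a,b]$, and the convexity of inversion sets (\cite[Lemma 2.10]{hohlweg2016inversion}, i.e.\ $\mathcal{P}_w \cap \widehat{\Phi} = N(w)$) then gives that the normalized roots on $[a,b]$ are exactly $N(w) \cap d = N(w) \cap \Phi_d$. Write them as $c_0 = a, c_1, \dots, c_k = b$ in the order in which they appear along the segment. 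By the restriction property of \S\ref{part:roots}, $N(w) \cap \Phi_d$ is an inversion set of the dihedral group $H_d$.

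Next I would invoke the elementary description of inversion sets in a dihedral group: ordering $\Phi_d^+$ by angle between the two roots of $\Delta_d$, every inversion set is an initial segment starting at one of these two simple roots, and every non-trivial, non-maximal dihedral element has a unique descent, namely the root at the far end of that segment. Here $b \in \Delta_d$ is simple while $a = c_0 \notin \Delta_d$, and the segment $[a,b]$ does not reach the other simple root of $\Delta_d$; hence $\{c_0, \dots, c_k\}$ is precisely the initial segment starting at $b$, and its unique dihedral descent is its far end $a$.

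Finally I would transfer this to removal orders. Fix a removal order $(\rho_1, \dots, \rho_{|w|})$. For each index $i$, applying the restriction property to $w_i$ shows that $N(w_i) \cap \Phi_d$ is again a dihedral inversion set; since $N(w_i) \cap \Phi_d \subseteq \{c_0,\dots,c_k\}$ never contains the other simple root, it is, when non-empty, an initial segment starting at $b$. Consequently, removing a root $\rho \in \Phi_d$ from $N(w_i)$ must leave a dihedral inversion set, which by the previous paragraph forces $\rho$ to be the far-from-$b$ endpoint of the current segment $N(w_i) \cap [a,b]$. Inductively, the only way the roots of $[a,b]$ can leave the successive inversion sets is in the order $c_0 = a, c_1, \dots, c_k = b$; that is, they are removed from $a$ to $b$.

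The main obstacle is the dihedral reduction itself: one must be sure not only that $N(w) \cap \Phi_d$ is an inversion set of $H_d$, but that this persists for every $w_i$ along the removal order, so that the unique-descent phenomenon of the rank-$2$ system is felt at each step. Once this compatibility between global descents in $W$ and local descents in $H_d$ is in hand, the statement reduces to the completely explicit combinatorics of a dihedral root system, which presents no further difficulty.
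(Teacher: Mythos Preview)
Your argument is correct. The reduction to the dihedral subgroup $H_d$ works: the point you flag as the main obstacle, that $N(w_i)\cap\Phi_d^+$ is again an inversion set of $H_d$ for every $i$, follows at once by restricting a separating hyperplane for $N(w_i)$ to the line $d$ (this gives a finite separable subset of $\Phi_d^+$, hence an inversion set by the same characterization you use for $W$). Once that is granted, the classification of dihedral inversion sets as initial segments from a simple root, together with the observation that the second simple root of $\Delta_d$ never belongs to $N(w_i)$, forces the removals on $[a,b]$ to proceed from $a$ to $b$ exactly as you describe.

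The paper reaches the same conclusion by a shorter, purely geometric shortcut that bypasses the dihedral classification. Since $a\notin\Delta_d$, there is a root $\alpha\in\Phi_d^+$ on $d$ lying beyond $a$ (on the side opposite $b$); as $[a,b]$ is an edge of $\mathcal P_w$, this $\alpha$ lies outside every $N(w_i)$. If at some step a root of $[a,b]$ closer to $b$ were removed while one closer to $a$ survived, a separating hyperplane for the resulting inversion set would have to meet the line $d$ once between $\alpha$ and the surviving root and once again between the surviving root and the removed one, which is absurd. Your ``initial segment from $b$'' statement is precisely this one-crossing fact rephrased through the structure theory of $H_d$; the paper's version avoids invoking restriction of inversion sets to reflection subgroups, while yours has the advantage of making the full induction along the edge explicit rather than leaving it implicit in a two-line sketch.
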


\begin{proof}
	Notations are defined on Figure~\ref{fig:removal}. Suppose we remove $b$ first. Since the remaining roots form an inversion set, it means that a separation hyperplane $H$ must cut the line $(a, b)$ between $a$ and $b$ \emph{and} between $\alpha$ and $a$: this is absurd.
\end{proof}

\begin{figure}[h] 
	\centering
	\includegraphics[width=0.8\textwidth]{./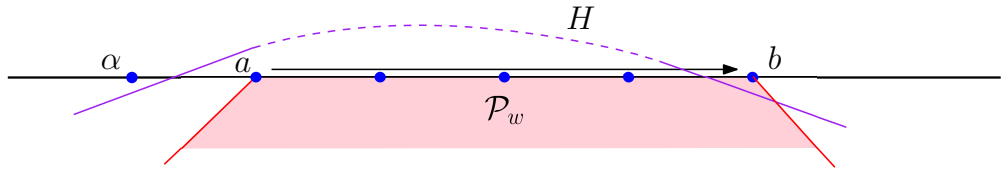}
	\caption{A removal order follows the arrows. $\alpha$ exist by definition of $[a, b]$ being an arrow from $a$ to $b$.}
	\label{fig:removal}
\end{figure}

\begin{rmk}
	Note that this argument has two consequences. Firstly, an edge $[a, b]$ of $\mathcal{P}_w$ is either an arrow or \emph{complete}, meaning $(a, b) \cap \widehat{\Phi} = [a, b]\cap\widehat{\Phi}$. 
	Secondly, it means that any removal order determines an orientation on the edges of $\mathcal{P}_w$. 
	Indeed, even if $[a, b]$ is complete, if $a$ is removed first, the remaining roots on $[a, b]$ must be removed from $a$ to $b$. 
	This allows us to define a (directed) \emph{removal graph} $G_R(w)$ formed of the vertices and edges of $\mathcal{P}_w$ oriented by $R$: if $[a, b]$ is an edge of $\mathcal{P}_w$, we note $a \rightarrow_R b$ the corresponding edge in $G_R(w)$.
\end{rmk}

We can now restate Lemma~\ref{lem:removal_order} as: for any order $R$, $G_{bip}(w)$ is a subgraph of $G_R(w)$. This will help us to reach our goal for this paragraph:

\begin{prop}\label{prop:acyclic}
	The bipodality graph is acyclic.
\end{prop}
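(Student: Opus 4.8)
The plan is to obtain acyclicity as a purely formal consequence of the subgraph relation $G_{bip}(w) \subseteq G_R(w)$ recorded just above, once we produce a single removal order $R$ and observe that it linearly orders the vertices of $\mathcal{P}_w$ compatibly with every edge at once. First I would check that removal orders exist at all: since $w$ has finite length, any reduced word for $w$ traces a maximal chain $1 \leq_R \cdots \leq_R w$ in the weak order, and by Proposition~\ref{prop:inversion_sets} each cover adjoins exactly one positive root to the inversion set. Listing these roots in the order they are adjoined yields a permutation $R = (\rho_1, \dots, \rho_{|w|})$ of $N(w)$ whose every prefix $\{\rho_1, \dots, \rho_{i-1}\}$ is itself an inversion set, which is precisely the condition defining a removal order in Lemma~\ref{lem:removal_order}.

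Next I would record the total order carried by $R$. Let $t(\rho)$ denote the step at which a root $\rho$ is removed, a bijection $t \colon N(w) \to \{1, \dots, |w|\}$. By the remark following Lemma~\ref{lem:removal_order}, each edge $[a, b]$ of the $1$-skeleton of $\mathcal{P}_w$ is oriented in $G_R(w)$ from its earlier-removed endpoint to its later-removed one, so $a \rightarrow_R b$ holds exactly when $t(a) < t(b)$. Consequently every directed edge of $G_R(w)$ strictly increases $t$; thus $t$ is a topological ordering of the vertices, and any directed path strictly increases $t$, so $G_R(w)$ can contain no directed cycle. That is, $G_R(w)$ is acyclic.

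Finally, the restatement of Lemma~\ref{lem:removal_order} gives $G_{bip}(w) \subseteq G_R(w)$ as directed graphs: every arrow $a \rightarrow b$ of $G_{bip}(w)$ is an edge of $\mathcal{P}_w$ and, by the lemma, is removed from $a$ to $b$, hence appears with the same orientation in $G_R(w)$. Since any subgraph of a directed acyclic graph is again acyclic, $G_{bip}(w)$ is acyclic, which is the assertion of Proposition~\ref{prop:acyclic}.

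I do not expect a genuine obstacle in this step: all the geometric content — the impossibility, shown in the proof of Lemma~\ref{lem:removal_order}, of removing the head $b$ of an arrow before its tail $a$ — has already been absorbed into that lemma. The only points demanding care are the existence of a removal order and the observation that the orientation it induces on the \emph{whole} $1$-skeleton (not merely on the arrows) is governed by a single total order $t$, so that we obtain a bona fide directed acyclic graph rather than a merely locally consistent orientation. I would also note that this argument is insensitive to the rank, matching the paper's observation that only the localization of the sources is special to rank $3$.
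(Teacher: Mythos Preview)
Your argument is correct and follows the same logical skeleton as the paper: exhibit a removal order $R$, note that every edge of $G_R(w)$ goes from the earlier-removed endpoint to the later-removed one so that the removal time furnishes a topological ordering, and conclude via the inclusion $G_{bip}(w)\subseteq G_R(w)$. The one substantive difference is in how $R$ is produced. You take $R$ from a reduced word (a maximal chain in weak order), which is the cleanest way to see that removal orders exist and makes the acyclicity of $G_R(w)$ an immediate tautology; in fact your argument shows $G_R(w)$ is acyclic for \emph{every} $R$. The paper instead builds $R$ geometrically, by sweeping a generic separating hyperplane through $\mathcal{P}_w$. That construction is slightly heavier here but is not gratuitous: the sweep-built $G_R(w)$ has a \emph{unique} source (the first root the hyperplane meets, necessarily a descent), and the paper leans on exactly this property in Case~1 of Proposition~\ref{prop:sources}. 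Your combinatorial $R$ need not have a unique source, so if you keep your version you should be aware that the later argument will require the geometric sweep anyway.
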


\begin{proof}
	Because $G_{bip}(w)$ is a subgraph of $G_R(w)$ for any removal order $R$, it is enough to show that $G_R(w)$ is acyclic for some $R$. Let $H$ be a separation hyperplane. In the spirit of Lemma~\ref{lem:weaker_is_better}, we further assume that $H$ is generic: it is not parallel to any line through two roots in $\mathcal{P}_w$. We continuously translate $H$ through $\mathcal{P}_w$, following its orthogonal line. Because $H$ is generic, the roots will traverse $H$ one at a time: except at these times, $H$ is a separation hyperplane for the remaining roots. Hence, this defines a removal order. Since $a \rightarrow_R b$ is only possible if $H$ visits $a$ before $b$, the related removal graph $G_R(w)$ is acyclic.
\end{proof}

\subsection{The sources of $G_{bip}(w)$ are descents.}\label{part:sources}
The goal of this section is to prove the following proposition:
\begin{prop}\label{prop:sources}
	Assume that $|S| = 3$. Then every source of the bipodality graph on $\mathcal{P}_w$ is a descent.
\end{prop}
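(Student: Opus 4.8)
The plan is to argue by contradiction: suppose $v$ is a source of $G_{bip}(w)$ that is not a descent, and derive a violation of the separability of $N(w)$. Being a source means every edge of $\mathcal{P}_w$ incident to $v$ is an outgoing arrow $v \rightarrow b$; by Definition~\ref{def:bipodality} each such $v$ lies on a line $d$ through two roots with $v \in \Phi_d^+ \setminus \Delta_d$ not simple, while its neighbor $b \in \Delta_d$ is simple. Since $|S| = 3$, the projective picture lives in a triangle $\conv(\Delta)$, so the inversion polytope $\mathcal{P}_w$ is a planar polygon. This is exactly where the rank restriction enters: in the plane, a vertex $v$ has precisely two incident edges on $\mathcal{P}_w$, say $[v, b_1]$ and $[v, b_2]$, and the source condition forces both to be arrows pointing away from $v$. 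I would first extract the local geometric picture at such a $v$: along each incident edge, because $v$ is the tail of an arrow, there is on the line $(v, b_i)$ a further root $\alpha_i$ with $v$ lying strictly between $\alpha_i$ and $b_i$ (the root $\alpha$ of Figure~\ref{fig:removal} witnessing non-simplicity of $v$).

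First I would use Lemma~\ref{lem:weaker_is_better} together with the wiggling machinery of \S\ref{part:wiggling} to convert ``$v$ is not a descent'' into a statement about separation hyperplanes. By Proposition~\ref{prop:inversion_sets} (or equivalently Proposition~\ref{prop:tits}), $v$ is a descent precisely when $N(w) \setminus \{v\}$ is again an inversion set, i.e. when there is a weak separation hyperplane containing $v$ and cutting $v$ off alone. The contrapositive: if $v$ is a vertex of $\mathcal{P}_w$ but \emph{not} a descent, then no separating line of $\mathcal{P}_w$ can pass through $v$ while leaving $\alpha_1, \alpha_2$ (and the rest of $N(w)$) on the correct side. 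Concretely, since $v$ is a polygon vertex, any line through $v$ that locally supports $\mathcal{P}_w$ must have both edge-neighbors $b_1, b_2$ on the polytope side; but the arrow condition places $\alpha_1$ on the line $(v, b_1)$ beyond $v$ away from $b_1$, and likewise $\alpha_2$ beyond $v$ on $(v, b_2)$. The two extra roots $\alpha_1, \alpha_2$ thus sit in the two ``outer'' cone regions at the reflex side of $v$, and I would show they obstruct any supporting line at $v$ from being a weak separation hyperplane --- equivalently, that the root-free envelope argument of \S\ref{part:wiggling} forces $v$ to be removable, contradicting the hypothesis that it is not a descent.

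The crux, and the step I expect to be the main obstacle, is the following tension. A source cannot be a descent \emph{by the local arrow geometry alone}: indeed, in the plane the two arrows at $v$ with witnesses $\alpha_1, \alpha_2$ can coexist on a genuine polygon vertex. So the contradiction must use something global about $w$ --- and here is where I would invoke the reduction of Lemma~\ref{lem:reduction}, namely that $\Gamma_w \subset \Sigma$, \emph{together with} the bipodality of $\Sigma$ (the Dyer--Hohlweg theorem). The idea is that the witnesses $\alpha_1, \alpha_2$, lying on the same lines $d_1, d_2$ as the arrows $v \rightarrow b_i$, are themselves roots of $\mathcal{P}_w$ that are ``higher'' in the removal order (by Lemma~\ref{lem:removal_order}, removal goes from tail to head, so $\alpha_i$ and $v$ are removed before $b_i$). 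I would trace, from $v$ being a non-descent source, an infinite or length-violating descent structure: since $v$ is a source it has no incoming arrow, and since it is not a descent some neighboring configuration forces a fresh arrow or a fresh vertex outside $\Gamma_w$, contradicting finiteness of $|w|$ or the closure under bipodality.

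In execution I would proceed as follows. (1) Reduce to the planar polygon $\mathcal{P}_w$ and fix a source $v$ with outgoing arrows to its two edge-neighbors $b_1, b_2$. (2) Use Proposition~\ref{prop:tits} to pass to the dual picture, where $v$ being a vertex of $\mathcal{P}_w$ but not a descent means $H_v$ is a supporting hyperplane of the inversion region but \emph{not} a wall of $w(\mathcal{C})$; in rank $3$ the dual Coxeter complex is a one-dimensional object (arcs on a circle, as in Figure~\ref{fig:duality}), and I would exploit that a vertex of the polygon with two outgoing arrows corresponds dually to a point whose two adjacent walls pinch it in a way incompatible with not being a wall. (3) Make this precise by the wiggling argument: tilt a generic separation hyperplane so that $v$ is the last root removed along one edge before $b_1$; Lemma~\ref{lem:weaker_is_better} applied to the face spanned by $v$ then would place $v \in \Gamma_w$ unless the presence of $\alpha_1$ or $\alpha_2$ blocks the tilt, and I would show that blocking forces one of $b_1, b_2$ to fail to be a polygon vertex, contradicting that $[v, b_i]$ is an edge. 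The delicate planar case analysis --- distinguishing whether $\alpha_1, \alpha_2$ lie on the same side of $v$ or straddle it, and handling the possibility that $b_1$ or $b_2$ is itself a simple root $e_s$ of $\Delta$ --- is the genuinely rank-$3$-specific argument, which is why this proposition does not lift to higher rank.
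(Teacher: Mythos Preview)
Your proposal rests on a misreading of what ``source of $G_{bip}(w)$'' means. You write that being a source forces \emph{every} edge of $\mathcal{P}_w$ incident to $v$ to be an outgoing arrow. But $G_{bip}(w)$ only contains those edges of $\mathcal{P}_w$ that happen to be arrows; the remark after Lemma~\ref{lem:removal_order} says explicitly that an edge of $\mathcal{P}_w$ is either an arrow \emph{or complete}, and complete edges are simply absent from $G_{bip}(w)$. So a source of $G_{bip}(w)$ may very well be incident to a complete edge of the polygon, with no witnesses $\alpha_i$ available on that side. The paper's proof accordingly splits into two cases: (1) the source is incident only to outgoing arrows, and (2) the source is incident to at least one complete edge. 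Your proposal addresses only case~(1), and even there by a route that never closes (see below). Case~(2) is the genuinely hard one and is handled by Lemma~\ref{lem:sources}, which you never invoke: one first rules out simple roots (they can be removed last, hence are sinks in some $G_R(w)$), and then shows that a non-simple endpoint of a complete edge is always a descent via the weak-separation and rotation argument of that lemma.

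For case~(1) itself, the paper's argument is much shorter than what you sketch: a source incident only to arrows is a source in \emph{every} removal graph $G_R(w)$ (since $G_{bip}(w) \subset G_R(w)$ and the complete-edge orientations are the only thing that can vary). But the removal order obtained by translating a generic hyperplane through $\mathcal{P}_w$, as in Proposition~\ref{prop:acyclic}, has a \emph{unique} source --- the first root the hyperplane meets --- and that root is a descent by construction. Hence $v$ is that descent. Your contradiction outline via the witnesses $\alpha_1,\alpha_2$ and a tilting obstruction is vaguer and, more importantly, brings in Lemma~\ref{lem:reduction} and the bipodality of $\Sigma$; neither is relevant here, since Proposition~\ref{prop:sources} is a statement about the polygon $\mathcal{P}_w$ alone and makes no reference to $\Sigma$.
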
  

Let $v$ be a source: either it is only connected to outward arrows (case 1), or it is connected to at least one complete edge (case 2). Case 1 is easy: such a source in $G_{bip}(w)$ must be a source in any removal graph. In particular, removal graphs obtained by pushing hyperplanes through polytopes (as in Proposition \ref{prop:acyclic}) only have one source, which is the first root they meet and thus must be a descent.

Case 2 is harder. First, notice that a simple root cannot be a source because it always can be removed last, which means it is a sink in some removal graph. The only remaining possibility is dealt with in the following lemma.

\begin{lem}\label{lem:sources}
	We suppose that $(W, S)$ is a Coxeter system of rank 3. Let $\mathcal{P}_w$ be the inversion polygon of an element $w \in W$. Let $[a, b]$ be a complete edge of $\mathcal{P}_w$. If $a\notin \Delta$, then $a\in \Gamma_w$. 
\end{lem}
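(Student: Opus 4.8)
The plan is to exploit the fact that $[a,b]$ is a \emph{complete} edge, so in the projective picture the roots of $\mathcal{P}_w$ lying on the line $d$ through $a$ and $b$ are exactly $[a,b]\cap\widehat\Phi$. These all belong to the rank-$2$ subsystem $\Phi_d$ generated by the reflections on $d$, whose own projective picture is just the segment $d\cap H_1$. First I would recall from \S\ref{part:roots} that $(\Phi_d,\Delta_d)$ is a based root system for the reflection subgroup $H_d$, and that since we are in rank $3$ this subgroup is dihedral (finite or infinite). The hypothesis $a\notin\Delta$ means $a$ is an \emph{interior} root on the segment, not one of the two simple roots $\Delta_d$ sitting at its endpoints; in particular there is at least one root $\alpha$ on $d$ strictly beyond $a$ away from $b$, exactly the configuration of Figure~\ref{fig:removal}.

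\textbf{Constructing a weak separation hyperplane through $a$.}
The heart of the argument is to produce a weak separation hyperplane $H$ for $\mathcal{P}_w$ that \emph{contains $a$ as a vertex of the face it supports}, so that Lemma~\ref{lem:weaker_is_better} can be invoked to conclude $a\in\Gamma_w$. My approach is to start from any genuine separation hyperplane, which exists because $N(w)$ is separable, and then wiggle it as in \S\ref{part:wiggling}: because separation hyperplanes for $N(w)$ are homeomorphic to $w(\mathcal{C})/\mathbb{R}^+$, I can continuously move the hyperplane so that it sweeps toward $a$ along $d$. Since $[a,b]$ is complete, on the line $d$ the root $a$ is the \emph{extreme} element of $N(w)\cap d$ on the side away from $b$ (with $\alpha\notin N(w)$ lying just beyond it), so $a$ is genuinely a vertex of $\mathcal{P}_w$ and there is room between $a$ and $\alpha$ to place $H$. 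The rank-$3$ hypothesis enters here: in the $2$-dimensional picture $H_1$, a hyperplane is a line, and I can rotate it about $a$ within the root-free envelope so that it contains $a$ while keeping every other root of $N(w)$ strictly on one side and the roots outside $N(w)$ (in particular $\alpha$, and any root off the line $d$) strictly on the other. The completeness of $[a,b]$ guarantees no root of $N(w)$ lies on $d$ beyond $a$, so tilting the line to pass through $a$ does not accidentally absorb a non-inversion or expel an inversion.

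\textbf{Finishing via Lemma~\ref{lem:weaker_is_better}.}
Once I have a weak separation hyperplane $H$ with $a\in H$ and $a$ a vertex of the face $F=H\cap\mathcal{P}_w$, Lemma~\ref{lem:weaker_is_better} immediately yields that the vertices of $F$, in particular $a$, are right geometric descents, i.e. $a\in\Gamma_w$, which is exactly the claim. The main obstacle I anticipate is the \emph{existence} of such an $H$: I must verify that the rotation about $a$ can be carried out without crossing an accumulation point of $\widehat\Phi$ (which would violate the ``$H$ does not meet the normalized isotropic cone'' clause of Definition~\ref{def:weak_separation}), and that in the rank-$3$ setting the planar geometry genuinely allows isolating $a$ as the unique supported vertex on the relevant side. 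This is precisely where rank $3$ is used and where the argument resists generalization, since in higher rank the face structure around $a$ is no longer governed by a single segment $d$ and the envelope argument becomes multi-dimensional. I would therefore spend the bulk of the proof checking that the generic separation line can be tilted into contact with $a$ while respecting both the separation and the isotropic-cone-avoidance conditions, using the boundedness of $\widehat\Phi$ and the positive distance from $H$ to the roots it omits, as guaranteed by the remarks following Definition~\ref{def:weak_separation}.
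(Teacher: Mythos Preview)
Your proposal has the right endgame---invoke Lemma~\ref{lem:weaker_is_better} on a weak separation line meeting $\mathcal{P}_w$ at $a$---but the argument as written contains a genuine misunderstanding and leaves the actual difficulty untouched.

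\textbf{The misreading of ``complete''.} You write that $a\notin\Delta$ makes $a$ an interior root of $\Phi_d$, so that there is a root $\alpha$ on $d$ strictly beyond $a$, ``exactly the configuration of Figure~\ref{fig:removal}''. This is the \emph{arrow} picture, not the complete one. By definition a complete edge satisfies $(a,b)\cap\widehat\Phi=[a,b]\cap\widehat\Phi$: every root on the whole line $d$ already lies in the segment $[a,b]$, so $a$ and $b$ are the extreme roots on $d$, i.e.\ $\{a,b\}=\Delta_d$. There is no $\alpha$ beyond $a$; the reflection subgroup $H_d$ is finite dihedral, and this is exactly why $d$ misses the isotropic cone. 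You have conflated the global simple system $\Delta$ with the rank-$2$ simple system $\Delta_d$: the hypothesis $a\notin\Delta$ says nothing about $a$'s position inside $\Phi_d$.

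\textbf{The missing step.} You propose to rotate a separation line until it passes through $a$ and then stop, but you never establish that such a line can be chosen with $\widehat\Phi\setminus N(w)$ strictly on the far side. That is the entire content of the lemma. The paper does not try to pivot around $a$; it takes the edge-line $d=(a,b)$ itself as the candidate weak separation line (it already supports the face $[a,b]$ and avoids $Q_1$ by completeness) and proves by contradiction that $d_+\cap\widehat\Phi=N(w)$. Concretely: if $d_+\cap\widehat\Phi$ were a strictly larger (finite) set it would be $N(w')$ for some $w'$; rotating a separation line for $w$ toward one for $w'$ and tracking the last root $c$ crossed shows $c\in\Gamma_{w'}$, while $a,b\in\Gamma_{w'}$ already by Lemma~\ref{lem:weaker_is_better}. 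Three descents in rank~$3$ force $w'$ to be the longest element, hence $N(w')=\Phi^+$; but $a\notin\Delta$ guarantees a simple root lies strictly on each side of $d$, which is impossible. This counting-of-descents contradiction is the idea your sketch is missing, and it is precisely where the hypothesis $a\notin\Delta$ is actually used.
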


\begin{proof}
	Notice first that because $[a, b]$ is complete, $d = (a, b)$ does not intersect the isotropic cone. 
	If we can prove that $d$ is a weak separation line for $\mathcal{P}_w$, from Lemma~\ref{lem:weaker_is_better} we get that $a$ must be a descent. 
	Let $\delta$ be a separation line for $\mathcal{P}_w$, and $d_+$ (resp. $\delta_+$) the closed half-plane associated to $d$ (resp. $\delta$) containing $\mathcal{P}_w$.
	We treat here only the case where $d_+ \cap \widehat{\Phi}$ is finite, that is, is the inversion set of some $w'$.
	If we can move $\delta$ to $d$ without meeting any root, we will have shown that $w = w'$.
	Using Lemma~\ref{lem:weaker_is_better} we define a separation hyperplane $d'$ for $\mathcal{P}_{w'}$.
	We then rotate $\delta$ to $d'$ around their intersection point, in the direction where it does not pass through $\mathcal{P}_w$.
	If we meet a root during the movement, it must be in $J = d_+ \cap (\mathbb{R}^2 \setminus \delta_+)$ (see Figure~\ref{fig:rotation}).
	Up to small perturbations of our movement, we may assume that we meet a unique last root $c$, meaning we enter $w'(\mathcal{C})$ through the wall associated to $c$.
	We have $c \in N(w')$ and by reversing the movement we can remove it and still be left with an inversion set, so $c \in \Gamma_{w'}$.
	Since $a, b \in \Gamma_{w'}$, this means $|\Gamma_w| = 3$ so $w'$ must be the maximal element and $N(w') = \Phi^+$.
	But there is a simple root on either side of $d'$ because $a$ is not a simple root.
	This is absurd.
	The case where $d_+ \cap \widehat{\Phi}$ is infinite is treated similarly by considering the other side of $d$ and interpreting walls as three different elements covering a non neutral element.
\end{proof}

\begin{figure}
	\centering
	\includegraphics[width=0.45\textwidth]{./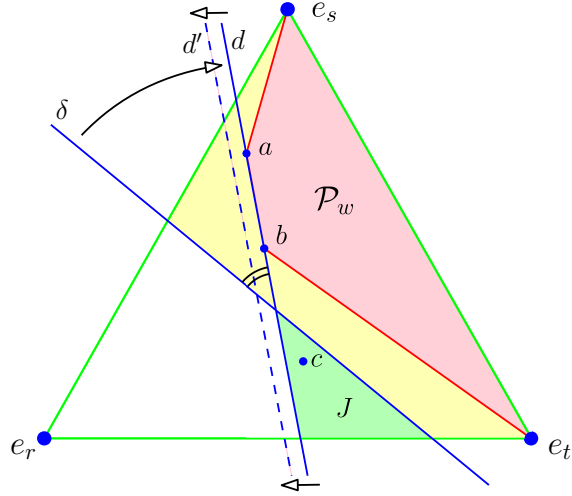}
	\caption{Inversion polygones of $sw$ and $w$ with $w = rtrsrtr$.}
	\label{fig:rotation}
\end{figure}


\section{Final remarks}

It can be noted that the proof only requires the bipodality property of the set of small roots and thus can be extended to other bipodal sets. In particular, \cite[Conjecture 3]{dyer2016small} proposes that the set of $n$-small roots (a generalization of small roots)
is always bipodal. 
If this is the case, the proof we have presented here generalizes to $n$-low elements and $n$-small inversion sets. 
It is our hope that this partial result in rank three could be used as a lemma for higher rank as in \cite[Lemma 4.21]{dyer2016small}, through the careful choice of rank 3 reflection subgroups. 
Finally, considering inversion polytopes by themselves yields another special case: 
one can show that, when all small roots lie on the edges of the simplex formed by $\conv(\Delta)$ in the projective picture, all small inversion polytopes $\conv(\lambda)$, $\lambda \in \Lambda$,  are inversion polytopes.
This happens for instance in the so called \emph{right-angled case} or when no pair of generators in $S$ commute.


\acknowledgements{This work was initiated during an internship under the supervision of Christophe Hohlweg in 2017. The author is grateful to Christophe Hohlweg for tutorship, Nathan Chapelier-Laget for helpful conversations, and Nicolas M. Thiéry for advice on writing.}

\printbibliography

\end{document}